\newtheorem{thm}{Theorem}[section]
\newtheorem{prop}[thm]{Proposition}
\newtheorem{lem}[thm]{Lemma}
\newtheorem{cor}[thm]{Corollary}
\theoremstyle{definition}
\newtheorem{dfn}[thm]{Definition}
\theoremstyle{remark}
\newtheorem{rem}[thm]{Remark}
\newtheorem*{acknowledgement}{Acknowledgements}
\newcommand{\R}{\mathbb{R}}
\newcommand{\Z}{\mathbb{Z}}
\newcommand{\Fr}{\mathrm{Fr}}
\title{Smoothness filtration of the magnitude complex}
\author{Kiyonori Gomi}
\address{
Department of Mathematical Sciences,
Shinshu University, 
3--1--1 Asahi, Matsumoto, Nagano 390-8621,
Japan.}
\email{kgomi@math.shinshu-u.ac.jp}
\subjclass[2010]{55N35, 51F99, 18G40}
\keywords{Magnitude homology, metric space, spectral sequence.}
\date{\today}
\begin{document}

\begin{abstract}
We introduce an intrinsic filtration to the magnitude chain complex of a metric space, and study basic properties of the associated spectral sequence of the magnitude homology. As an application, the third magnitude homology of the circle is computed. 
\end{abstract}

\maketitle

\tableofcontents


\section{Introduction}

The magnitude homology is a homology group of a chain complex constructed from any metric space. As a categorification of the magnitude of a metric space \cite{L}, this notion is first introduced to finite metric spaces associated to graphs with the shortest path length by Hepworth and Willerton \cite{H-W}, and then generalized to general metric spaces by Leinster and Shulman \cite{L-S}. Since the introduction, a number of features of the magnitude homology have been uncovered recently \cite{Hep,Jub,K-Y,Ott}. 

The purpose of this paper is to provide an intrinsic spectral sequence computing the magnitude homology. Spectral sequences are ubiquitous tools computing homology and cohomology of any kind, and are well developed in particular in algebraic topology (see \cite{B-T,K-T,Spa} for example). To provide our spectral sequence, we introduce a filtration to the chain complex giving the magnitude homology, based on an idea of ``smoothness'' of points in chains due to Kaneta and Yoshinaga \cite{K-Y}. Accordingly, we call it the smoothness filtration. Then we have the associated ``smoothness'' spectral sequence for the magnitude homology. Its properties to be studied in this paper are as follows:
\begin{itemize}
\item
We describe the $E^1$-term of the spectral sequence concretely. Then, using the notion of frames in \cite{K-Y}, we show that the $E^1$-term decomposes into the direct sum of subcomplexes.

\item
We show that the spectral sequence always degenerates at $E^4$.

\item
We provide criteria for $E^2$- and $E^3$-degeneracy. For instance, if there is no $4$-cut, then the spectral sequence degenerates at $E^2$.

\item
We show some vanishing results assuming the Menger convexity and an additional condition.

\end{itemize}

As an application of the spectral sequence, we compute the third magnitude homology of the circle of radius $r$. Its magnitude homology up to degree $2$ has been determined completely \cite{K-Y,L-S}, whereas the other cases seem to be not yet. In degree $3$, the vanishing of the magnitude homology is shown in some cases \cite{K-Y}.  We prove that this vanishing is always the case. (See Section \ref{sec:circle} for more detail.) This application proves that our spectral sequence is useful for the computation of the magnitude homology groups. In a subsequent paper \cite{G}, the spectral sequence is further applied to a complete description of the third magnitude homology of a so-called geodesic metric space.

\medskip

Provided the magnitude homology, there is also its dual, i.e.\ the magnitude cohomology. This is anticipated in \cite{H-W}, and is investigated in \cite{Hep} together with its non-commutative product. Dualizing the filtration of the magnitude chain complex, we get a filtration of the magnitude cochain complex. 

The associated spectral sequence for the magnitude cohomology has properties parallel to those of the spectral sequence for the magnitude homology, and further has a compatibility with the non-commutative product structure. However, these facts can be shown rather easily, so that their detail will not be given in this paper.

\bigskip

The outline of this paper is as follows. In Section \ref{sec:preliminary}, we recall some basic setup for the magnitude homology. In Section \ref{sec:filtration}, we introduce our filtration to the magnitude chain complex, and study the associated spectral sequence. At the end of this section, some sample computations of the spectral sequence are supplied. Then, in Section \ref{sec:circle}, we carry out the computation of the third magnitude homology of the circle applying our spectral sequence. 

\bigskip

\begin{acknowledgement}
I would like to thank Yasuhide Numata and Masahiko Yoshinaga for discussions. I would also like to thank Yuzhou Gu for pointing out a mistake in an earlier version of this paper. This work is supported by JSPS Grant-in-Aid for Scientific Research on Innovative Areas "Discrete Geometric Analysis for Materials Design": Grant
Number JP17H06462.
\end{acknowledgement}


\section{Preliminary}
\label{sec:preliminary}

As preliminary, we introduce some basic notions about magnitude complexes. For more detail, we refer to \cite{H-W,K-Y,L-S}, although we are using some different notations.

\medskip

Throughout this section, we choose and fix a metric space $(X, d)$.


\subsection{Chains of points}

For a non-negative integer $n$, an \textit{$n$-chain} $\langle x_0, \cdots, x_n \rangle$ of points on $X$ means a sequence $x_0, \cdots, x_n$ of $n+1$ points on $X$. We say that an $n$-chain $\langle x_0, \cdots, x_n \rangle$ is \textit{proper} if $x_0 \neq x_1 \neq x_2 \neq \cdots \neq x_{n-1} \neq x_n$. We let $\widehat{P}_n(X) = \widehat{P}_n$ be the set of $n$-chains, and $P_n(X) = P_n \subset \widehat{P}_n$ the subset of proper $n$-chains. Given an $n$-chain $\gamma = \langle x_0, \cdots, x_1 \rangle \in P_n$, we say that \textit{a point $x$ is in $\gamma$}, or \textit{$x$ belongs to $\gamma$}, when $x$ is one of the points $x_i$ in the sequence $x_0, \cdots, x_n$.

\medskip

We define the \textit{length} of an $n$-chain $\langle x_0, \cdots, x_n \rangle \in \widehat{P}_n$ to be
$$
\ell(\langle x_0, \cdots, x_n \rangle)
= d(x_0, x_1) + d(x_1, x_2) + \cdots + d(x_{n-1}, x_n).
$$
We write $\widehat{P}^\ell_n(X, d) = \widehat{P}^\ell_n(X) = \widehat{P}^\ell_n \subset \widehat{P}_n$ for the subset of $n$-chains of length $\ell$, and $P^\ell_n(X, d) = P^\ell_n(X) = P^\ell_n \subset P_n$ for that of proper $n$-chains of length $\ell$.

\medskip

Given three points $x, y, z \in X$, we say that \textit{$y$ is between $x$ and $z$}, and write $x < y < z$, when $x \neq y \neq z$ and $d(x, y) + d(y, z) = d(x, z)$ hold true. In the case that $y$ is not between $x$ and $z$, we write $x \not< y \not< z$. A point $x_i$ in a proper $n$-chain $\gamma = \langle x_0, \cdots, x_n \rangle$ is said to be \textit{smooth} if $i \neq 0, n$ and $x_{i-1} < x_i < x_{i+1}$. A point $x_i$ in $\gamma$ is \textit{singular} when $i = 0, 1$ or $x_{i-1} \not< x_i \not< x_{i+1}$. We follow \cite{K-Y} for the terms ``smooth'' and ``singular'', but the same notions are termed differently (``straight'' and ``crooked'') in \cite{Jub}.


\subsection{Magnitude complex}

For $\langle x_0, \cdots, x_n \rangle \in P_n(X)$ and $i = 0, \cdots, n$, we put
$$
\partial_i \langle x_0, \cdots, x_n \rangle
= \langle x_0, \cdots, x_{i-1}, x_i, \cdots, x_n \rangle.
$$
Notice that $\ell(\partial_i \gamma) \le \ell(\gamma)$ for any $\gamma \in P_n(X)$, due to the triangle inequality. More precisely, $\ell(\partial_i \gamma) < \ell(\gamma)$ if and only if the $i$th point $x_i$ in $\gamma = \langle x_0, \cdots, x_n \rangle$ is singular, and $\ell(\partial_i \gamma) = \ell(\gamma)$ if and only if $x_i$ in $\gamma$ is smooth. 

\medskip

We denote by $C^\ell_n(X, d) = C^\ell(X) = C^\ell_n$ the free abelian group generated on proper $n$-chains $\gamma \in P^\ell_n(X)$ of length $\ell$. We define a homomorphism $\partial : C^\ell_n \to C^\ell_{n-1}$ by the linear extension of
$$
\partial \langle x_0, \cdots, x_n \rangle
= \sum_{\stackrel{i = 1, \cdots, n-1}{x_{i-1} < x_i < x_{i+1}}}
(-1)^i \partial_i \langle x_0, \cdots, x_n \rangle.
$$
It can be shown that $\partial\partial = 0$, so that $(C^\ell_*, \partial)$ is a chain complex. This is called the \textit{magnitude complex of $(X, d)$}, and its homology group $H^\ell_*(X, d) = H^\ell_*(X) = H^\ell_*$ is the magnitude homology of $(X, d)$ (with the characteristic length $\ell$).

\medskip

In the case that $\ell < 0$, the magnitude homology is apparently trivial. In the case that $\ell = 0$, we immediately get \cite{L-S}
$$
H^0_n(X)
= C^0_n(X)
= 
\left\{
\begin{array}{ll}
\bigoplus_{x \in X}
\Z \langle x \rangle, & (n = 0) \\
0. & (n \neq 0)
\end{array}
\right.
$$
Therefore we are mainly interested in the case that $\ell > 0$.

\medskip

We may say that $x_0$ and $x_n$ are the \textit{end points} of $\gamma = \langle x_0, \cdots, x_n \rangle \in P^\ell_n$. If $x_j$ is a smooth point in $\gamma$, then $\partial_j\gamma \in P^\ell_{n-1}$ also has the end points $x_0$ and $x_n$. Thus, given two points $x, y \in X$, we have a subcomplex $C^\ell_*(X; x, y) = C^\ell_*(x, y) \subset C^\ell_*$ of proper $n$-chains with specified endpoints, and also the decompositions
\begin{align*}
C^\ell_*(X) &= \bigoplus_{x, y \in X} C^\ell_*(X; x, y), &
H^\ell_*(X) &= \bigoplus_{x, y \in X} H^\ell_*(X; x, y),
\end{align*}
which often reduce our consideration.

\medskip

\begin{rem}
Suppose that $(X, d)$ is a finite metric space. The metric space is said to be \textit{positive} \cite{L,Mar} if the matrix $Z = (e^{-d(x, y)})$ indexed by points $x, y \in X$ is positive definite. Then the \textit{magnitude} \cite{L,L-S} of the metric space is defined to be the sum $\mathrm{Mag}(X, d) = \sum_{x, y \in X} M(x, y)$ of the entries in the matrix $Z^{-1} = M = (M(x, y))$ inverse to $Z$. The crucial fact \cite{H-W,L-S} is the formula justifying that ``the magnitude homology is the categorification of the magnitude'':
$$
\mathrm{Mag}(X, d) = \sum_{\ell \ge 0} q^\ell
\sum_{n \ge 0} (-1)^n \mathrm{rank}H^\ell_n(X),
$$
where $q = 1/e$. One can further show that 
$$
M(x, y) = \sum_{\ell \ge 0} q^\ell
\sum_{n \ge 0} (-1)^n \mathrm{rank}H^\ell_n(x, y).
$$
Thus, if all the direct summands $H^\ell_n(x, y)$ of the magnitude homology $H^\ell_n(X, d)$ are known, then the original distance function $d(x, y)$ of the positive and finite metric space is recovered as $d(x, y) = -\log Z(x, y)$, where $Z(x, y)$ is the $(x, y)$-entry of the matrix $M^{-1} = Z = (Z(x, y))$ inverse to $M = (M(x, y))$ whose component $M(x, y)$ is defined by applying the formula above to $H^\ell_n(x, y)$. A more sophisticated recovery theorem by using the magnitude cohomology ring is known \cite{Hep}.
\end{rem}


\subsection{Persistence of singular point}

For the introduction of our filtration, we show a simple lemma.

\begin{lem} \label{lem:singular_pts_persist_in_boundary}
Let $\gamma = \langle x_0, \cdots, x_n \rangle \in P_n(X)$ be a proper $n$-chain $\gamma = \langle x_0, \cdots, x_n \rangle \in P_n(X)$ with $n \ge 2$, and $x_j$ a smooth point in $\gamma$. If $x_i$ is a singular point in $\gamma$, then $x_i$ is also a singular point in $\partial_j \gamma$.
\end{lem}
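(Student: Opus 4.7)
The plan is a case analysis on the position of the index $j$ relative to $i$. Since $x_j$ is smooth, one has $0 < j < n$, and the length equation $d(x_{j-1}, x_j) + d(x_j, x_{j+1}) = d(x_{j-1}, x_{j+1})$ together with properness of $\gamma$ force $x_{j-1} \neq x_{j+1}$; hence $\partial_j \gamma$ is itself a proper $(n-1)$-chain, and since $i \neq j$, the point $x_i$ still appears in it. Moreover, for an interior singular point the condition $x_{i-1} \not< x_i \not< x_{i+1}$ amounts (given properness) to the strict triangle inequality $d(x_{i-1}, x_i) + d(x_i, x_{i+1}) > d(x_{i-1}, x_{i+1})$, so this is what I must re-establish for the neighbors of $x_i$ in $\partial_j \gamma$.

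The easy cases are dispatched first. If $i = 0$ or $i = n$, then because $0 < j < n$ the point $x_i$ remains an endpoint of $\partial_j \gamma$, and endpoints are always singular. If $0 < i < n$ and $|i - j| \geq 2$, then the two neighbors of $x_i$ in $\partial_j \gamma$ are still $x_{i-1}$ and $x_{i+1}$ (merely reindexed), so the strict inequality transfers verbatim and $x_i$ remains singular.

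The only real work is the adjacent case $j = i - 1$, since $j = i + 1$ is symmetric. Here the neighbors of $x_i$ in $\partial_j \gamma$ are $x_{i-2}$ and $x_{i+1}$, and I would use the smoothness identity $d(x_{i-2}, x_{i-1}) + d(x_{i-1}, x_i) = d(x_{i-2}, x_i)$ to rewrite the left-hand side of the desired strict inequality, and then apply the singularity inequality at $x_i$ followed by the ordinary triangle inequality on $x_{i-2}, x_{i-1}, x_{i+1}$:
\begin{align*}
d(x_{i-2}, x_i) + d(x_i, x_{i+1})
&= d(x_{i-2}, x_{i-1}) + d(x_{i-1}, x_i) + d(x_i, x_{i+1}) \\
&> d(x_{i-2}, x_{i-1}) + d(x_{i-1}, x_{i+1}) \\
&\geq d(x_{i-2}, x_{i+1}).
\end{align*}
The non-degeneracy $x_{i-2} \neq x_i$ needed to interpret this as $x_{i-2} \not< x_i \not< x_{i+1}$ is automatic, since otherwise $d(x_{i-2}, x_i) = 0$ would contradict $d(x_{i-2}, x_{i-1}) + d(x_{i-1}, x_i) > 0$ (and $x_i \neq x_{i+1}$ by properness).

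The main obstacle is really just identifying the adjacent case as the substantive one and checking it carefully; conceptually, what makes the argument go through is that a smooth point is metrically transparent, so absorbing it into a longer segment preserves both the strict triangle inequality that witnesses singularity and the non-coincidence conditions built into properness.
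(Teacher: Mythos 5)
Your proposal is correct and follows essentially the same route as the paper: dispose of the endpoint and non-adjacent cases immediately, then in the adjacent case use the smoothness equality to absorb the removed point, the singularity strict inequality, and the triangle inequality to get the strict inequality for the new neighbors (the paper writes out the mirror-image adjacent case and notes the other is analogous). Your extra remarks that $\partial_j\gamma$ is proper and that the strict inequality rules out betweenness are fine and consistent with the paper's implicit use of these facts.
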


\begin{proof}
If $i = 0, \cdots, j - 2$ or $i = j + 2, \cdots, n$, then $x_i$ is clearly a singular point in $\partial_j \gamma$. Consider the case that $i = j - 1$. By assumption, we have 
\begin{align*}
d(x_{j-1}, x_j) + d(x_j, x_{j+1}) &= d(x_{j-1}, x_{j+1}), \\
d(x_{j-2}, x_{j-1}) + d(x_{j-1}, x_j) &> d(x_{j-2}, x_j).
\end{align*}
Using these assumptions and the triangle inequality, we get
\begin{align*}
d(x_{j-2}, x_{j-1}) + d(x_{j-1}, x_{j+1})
&=
d(x_{j-2}, x_{j-1}) + d(x_{j-1}, x_j) + d(x_j, x_{j+1}) \\
&>
d(x_{j-2}, x_j) + d(x_j, x_{j+1}) \\
&\ge
d(x_{j-2}, x_{j+1}).
\end{align*}
Hence $x_{j-2} \not< x_{j-1} \not< x_{j+1}$ and $x_{j-1}$ is singular in $\partial_j\gamma$. In the same vein, we can also prove that $x_{j-1} \not< x_{j+1} \not< x_{j+2}$ and $x_{j+1}$ is singular in $\partial_j\gamma$.
\end{proof}

Notice that, for a smooth point $x_j$ in $\gamma$ given, the other smooth point $x_i \neq x_j$ in $\gamma$ may turn into a singular point in $\partial_j\gamma$. For example, let $x_0 < x_1 < x_2 < x_3$ be a \textit{$4$-cut}, namely, a sequence of four points on $(X, d)$ such that we have $x_0 < x_1 < x_2$ and $x_1 < x_2 < x_3$, but 
\begin{align*}
d(x_0, x_3) &<
d(x_0, x_1) + d(x_1, x_2) + d(x_2, x_3) \\
&\quad 
= d(x_0, x_2) + d(x_2, x_3) = d(x_0, x_1) + d(x_1, x_3).
\end{align*}
In this case, both $x_1$ and $x_2$ are smooth points in $\langle x_0, x_1, x_2, x_3 \rangle$. However, $x_1$ becomes a singular point in $\partial_2\langle x_0, x_1, x_2, x_3 \rangle = \langle x_0, x_1, x_3 \rangle$.

\medskip

\begin{rem}
As an analogy of Lemma \ref{lem:singular_pts_persist_in_boundary}, we can show the following: Let $x_j$ be a smooth point in a proper $n$-chain $\langle x_0, \cdots, x_n \rangle$. Suppose that $y$ is a point on $X$ such that $x_{j-1} < y < x_j$. Then $x_j$ is still smooth in $\langle x_0, \cdots, x_{j-1}, y, x_j, \cdots x_n \rangle$.
\end{rem}


\section{Smoothness filtration}
\label{sec:filtration}

We let $(X, d)$ be a metric space, and $\ell > 0$ a positive real number.


\subsection{Smoothness filtration}

For a proper $n$-chain $\gamma \in P^\ell_n$, we write $\sigma(\gamma)$ for the number of smooth points in $\gamma$.

\begin{dfn}
Let $p \in \Z$ be an integer given.
\begin{itemize}
\item[(a)]
We define $F_pP^\ell_n \subset P^\ell_n$ to be the subset consisting of proper $n$-chains $\gamma$ of length $\ell$ whose numbers of smooth points are less than or equal to $p$, that is, $\sigma(\gamma) \le p$
$$
F_pP^\ell_n = \{ \gamma \in P^\ell_n |\ \sigma(\gamma) \le p \}.
$$

\item[(b)]
We define the subgroup $F_pC^\ell_n(X, d) = F_pC^\ell_n(X) = F_pC^\ell_n$ of $C^\ell_n$ to be the free abelian group generated on $F_pP^\ell_n$
$$
F_pC^\ell_n(X, d) = \bigoplus_{\gamma \in F_pP^\ell_n}\Z \gamma.
$$
\end{itemize}
\end{dfn}

By Lemma \ref{lem:singular_pts_persist_in_boundary}, each $F_pC^\ell_* \subset C^\ell_*$ is a subcomplex. Moreover, we have an increasing filtration of the magnitude chain complex
$$
\cdots \subset 
F_{p-1}C^\ell_* \subset 
F_pC^\ell_* \subset 
F_{p+1}C^\ell_* \subset 
\cdots,
$$
which we shall call the \textit{smoothness filtration}. By definition, we have $F_{p}C^\ell_* = 0$ if $p \le -1$. Since any proper $n$-chain with $n \ge 1$ has at least two singular points as its endpoints, we also have $C^\ell_n = F_{p}C^\ell_n$ for $p \ge n-1$. It is obvious that each subcomplex $C^\ell_*(x, y) \subset C^\ell_*(X)$ has its own filtration $F_pC^\ell_*(x, y)$.


\subsection{Spectral sequence}

Once a chain complex is endowed with an increasing filtration, a general argument yields the associated spectral sequence (see \cite{B-T,K-T,Spa} for instance). In our setup, we have the smoothness filtration $F_pC^\ell_*$ of the magnitude complex $C^\ell_*$. We thus have the associated \textit{smoothness spectral sequence} $E^r_{p, q}$. Namely, we have for each non-negative integer $r$ a chain complex $(E^r_{p, q}, d^r)$
$$
\cdots \overset{d^r}{\longleftarrow}
E^r_{p - r, q + r - 1} \overset{d^r}{\longleftarrow}
E^r_{p, q} \overset{d_r}{\longleftarrow}
E^r_{p + r, q - r + 1} \overset{d^r}{\longleftarrow}
\cdots,
$$
and its homology 
$$
E^{r+1}_{p, q}
= \mathrm{Ker}[d^r : E^r_{p, q} \to E^r_{p - r, q + r - 1}]/
\mathrm{Im}[d^r : E^r_{p + r, q - r + 1} \to E^r_{p, q}]
$$
produces the next chain complex $(E^{r+1}_{p, q}, d^{r+1})$. The filtration is clearly first quadrant, and is exhausting as observed. Hence the spectral sequence converges to the graded quotient of $H^\ell_*$ with respect to a filtration: 
\begin{itemize}
\item
For each $p, q$, we have a positive number $r_\infty$ such that $E^r_{p, q} = E^{r_\infty}_{p, q}$ for all $r \ge r_\infty$. We write $E^\infty_{p, q} = E^{r_\infty}_{p, q}$.

\item
Let $F_pH^\ell_n \subset H^\ell_n$ be the subgroup
$$
F_pH^\ell_n =
\mathrm{Im}[H_n(F_pC^\ell_*, \partial) \to H^\ell_n]
$$
given as the image of the homomorphism induced from the inclusion $F_pC^\ell_* \subset C^\ell_*$. These subgroups form a filtration
$$
0 = F_{-1}H^\ell_n \subset
\cdots \subset
F_{p-1}H^\ell_n \subset
F_pH^\ell_n \subset
F_{p+1}H^\ell_n \subset
\cdots \subset
F_{n-1}H^\ell_n = H^\ell_n,
$$
and there are short exact sequences
\begin{gather*}
0 \to F_{n-2}H^\ell_n \to H^\ell_n \to E^\infty_{n-1, 1} \to 0, \\
0 \to F_{n-3}H^\ell_n \to F_{n-2}H^\ell_n \to E^\infty_{n-2, 2} \to 0, \\
\vdots \\
0 \to F_1H^\ell_n \to F_2H^\ell_n \to E^\infty_{2, n-2} \to 0, \\
0 \to F_0H^\ell_n \to F_1H^\ell_n \to E^\infty_{1, n-1} \to 0,
\end{gather*}
where $F_0H^\ell_n = E^\infty_{0, n}$.
\end{itemize}

\bigskip

A property of the spectral sequence specific to the magnitude chain complex is as follows.

\begin{thm} \label{thm:general_triviality_of_differential}
The following holds true.
\begin{itemize}
\item[(a)]
The differential $d^0$ on $E^0_{p, q}$ is trivial, so that $E^1_{p, q} = E^0_{p, q}$.

\item[(b)]
For $r \ge 4$, the differential $d^r$ on $E^r_{p, q}$ is trivial, so that $E^4_{p, q} = E^\infty_{p, q}$.
\end{itemize}
\end{thm}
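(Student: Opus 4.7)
My plan is to derive both parts from a careful accounting of how the smoothness count $\sigma$ changes under the individual face maps $\partial_j$.

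For part (a), the input is Lemma~\ref{lem:singular_pts_persist_in_boundary}: if $x_j$ is a smooth point of a proper chain $\gamma$ with $\sigma(\gamma) = p$, then every singular point of $\gamma$ remains singular in $\partial_j\gamma$, while $x_j$ itself is no longer present. Hence $\sigma(\partial_j\gamma) \le p - 1$, so $\partial(F_pC^\ell_*) \subset F_{p-1}C^\ell_*$. Since $d^0$ is induced by $\partial$ on the quotient $F_pC^\ell_*/F_{p-1}C^\ell_*$, it must be zero.

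For part (b), the main new input I would use is the matching lower bound $\sigma(\partial_j\gamma) \ge p - 3$. Since the smoothness of a point $x_i$ depends only on the local window $(x_{i-1}, x_i, x_{i+1})$, removing a smooth $x_j$ alters that window only at $i = j \pm 1$; hence at most two of the other smooth points of $\gamma$ can become singular, and therefore $\sigma(\partial_j\gamma) - \sigma(\gamma) \in \{-1, -2, -3\}$. Because each basis face $\partial_j\gamma$ has a definite $\sigma$-value, this gives an honest splitting $\partial = \partial^{(1)} + \partial^{(2)} + \partial^{(3)}$ on the whole chain complex $C^\ell_*$, where $\partial^{(k)}$ strictly lowers $\sigma$ by $k$. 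Expanding $\partial^2 = 0$ and regrouping by total $\sigma$-shift yields five quadratic identities
\begin{gather*}
(\partial^{(1)})^2 = 0, \qquad \partial^{(1)}\partial^{(2)} + \partial^{(2)}\partial^{(1)} = 0, \\
\partial^{(1)}\partial^{(3)} + (\partial^{(2)})^2 + \partial^{(3)}\partial^{(1)} = 0, \\
\partial^{(2)}\partial^{(3)} + \partial^{(3)}\partial^{(2)} = 0, \qquad (\partial^{(3)})^2 = 0.
\end{gather*}

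Given $[c] \in E^r_{p,q}$ for $r \ge 4$, decompose a representative as $c = c^{(p)} + c^{(p-1)} + \cdots$ by $\sigma$-degree. The hypothesis $\partial c \in F_{p-r}C^\ell_*$ unpacks as a cascade of equations $\sum_{i=1}^{\min(3,m)} \partial^{(i)} c^{(p-m+i)} = 0$ for $m = 1, \ldots, r-1$, involving only the three operators $\partial^{(i)}$. To establish $d^r[c] = 0$, I would construct, by induction on the $\sigma$-degree, a correction $c_2 \in F_{p-1}C^\ell_*$ with $\partial c_2 \in F_{p-r}C^\ell_*$ and $\partial c \equiv \partial c_2 \pmod{F_{p-r-1}C^\ell_*}$; this places $\partial c$ in the subgroup quotiented out when forming $E^r_{p-r, q+r-1}$, yielding the vanishing. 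At each inductive step, the obstruction to extending $c_2$ is a $\partial^{(1)}$-cycle by one of the five identities above, and must be resolved as a $\partial^{(1)}$-boundary by a further correction, again using the identities together with the cascade equations already satisfied by $c$.

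The main obstacle is precisely this inductive resolution: verifying that the obstructions appearing at each stage can actually be written as the required $\partial^{(1)}$-boundaries of admissible corrections. The crucial structural feature that makes it work is the vanishing $\partial^{(k)} = 0$ for $k \ge 4$, so once $r \ge 4$ no fresh contribution from $c^{(p)}$ propagates into the equations below level $F_{p-3}$; this is the mechanism that makes the spectral sequence degenerate exactly at $E^4$ and not at any later page.
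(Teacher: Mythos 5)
Your part (a) is correct and is essentially the paper's argument: by Lemma \ref{lem:singular_pts_persist_in_boundary} every singular point survives in $\partial_j\gamma$, so $\sigma(\partial_j\gamma)\le\sigma(\gamma)-1$, hence $\partial F_pC^\ell_*\subset F_{p-1}C^\ell_*$ and $d^0=0$.

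Part (b) has a genuine gap, and it is exactly the step you yourself flag as ``the main obstacle''. Your setup is fine: the splitting $\partial=\partial^{(1)}+\partial^{(2)}+\partial^{(3)}$ by $\sigma$-drop (this is Lemma \ref{lem:estimate_of_smooth_points_in_boundary}), the five quadratic identities, and the cascade of equations satisfied by a representative $c$ with $\partial c\in F_{p-r}C^\ell_*$. But the proof then rests on the assertion that the obstruction arising at each inductive step, being a $\partial^{(1)}$-cycle, ``must be resolved as a $\partial^{(1)}$-boundary''. Nothing you have provides this: the identities only show the obstructions are cycles, and no acyclicity of the complexes $(E^1_{*,q},d^1)$ is available (their homology is $E^2\neq 0$ in general). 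In fact the purely formal statement your argument would prove --- that any filtered complex whose filtration comes from a grading of the underlying module and whose differential has components of drop $1$, $2$, $3$ degenerates at $E^4$ --- is false. Take $C_n$ free on $a$ (grade $4$) and $b$ (grade $3$), $C_{n-1}$ free on $u,s,w$ (grades $2,1,0$), all other groups zero, with $\partial a=u+s$, $\partial b=-u-s+w$ and $\partial=0$ on $C_{n-1}$; every component of $\partial$ drops the grade by $1$, $2$ or $3$, and all five identities hold. Here $z=a+b\in F_4$ satisfies $\partial z=w\in F_0$, while the only elements $v\in F_3C_n$ are multiples of $b$ and none of them has $\partial v\in F_0$; since also $F_{-1}=0$, the class $[w]$ is nonzero in $E^4_{0,*}$ and $d^4[z]=[w]\neq 0$. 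So the degeneration at $E^4$ cannot follow formally from the three-term decomposition plus the quadratic identities; your induction cannot be closed at that level of generality, and a correct proof must use the magnitude complex (or the choice of representatives) more specifically. This counterexample does not, of course, contradict the theorem itself, since such a configuration need not occur in a magnitude complex.

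This is also where your route differs from the paper's, which is much more direct: it takes a representative $\gamma$ of the class, regarded as a combination of chains with exactly $p$ smooth points, and applies Lemma \ref{lem:estimate_of_smooth_points_in_boundary} to conclude that the chains occurring in $\partial\gamma$ all have at least $p-3$ smooth points, so that the part of $\partial\gamma$ in $\sigma$-degree $p-r\le p-4$ representing $d^r[\gamma]$ vanishes. Whichever way one argues, the delicate point is controlling the contribution of the lower $\sigma$-degree part of the representative --- precisely what your inductive correction was meant to do and does not accomplish as written.
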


As a preliminary to the proof of this theorem, we show a simple lemma.

\begin{lem} \label{lem:estimate_of_smooth_points_in_boundary}
Let $\gamma \in P_n^\ell$ be a proper $n$-chain which contains $p$ smooth points: $\sigma(\gamma) = p$. Then its boundary $\partial \gamma \in C^\ell_{n-1}$ is a linear combination of proper $(n-1)$-chains whose numbers of smooth points are either $p-1$, $p-2$ or $p-3$.
\end{lem}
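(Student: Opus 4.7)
The plan is to analyze the boundary term by term. Recall that
$$
\partial\gamma = \sum_{\substack{1 \le j \le n-1 \\ x_{j-1} < x_j < x_{j+1}}} (-1)^j \partial_j\gamma,
$$
so every summand is obtained by deleting a smooth point $x_j$ from $\gamma$. Fix such a smooth point $x_j$; I claim $\sigma(\partial_j\gamma) \in \{p-1, p-2, p-3\}$.

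First I would invoke Lemma \ref{lem:singular_pts_persist_in_boundary}: every singular point of $\gamma$ (other than $x_j$) remains singular in $\partial_j\gamma$, as do the endpoints $x_0$ and $x_n$, which are unchanged. Hence the set of smooth points of $\partial_j\gamma$ is a subset of $S(\gamma) \setminus \{x_j\}$, a set of cardinality $p-1$. It remains to bound how many elements of $S(\gamma) \setminus \{x_j\}$ can actually lose smoothness. The next step is to examine, for each $x_i \in S(\gamma)$ with $i \ne j$, the triple of consecutive points around $x_i$ in $\partial_j\gamma$. Indexing $\partial_j\gamma = \langle y_0, \ldots, y_{n-1} \rangle$ with $y_k = x_k$ for $k < j$ and $y_k = x_{k+1}$ for $k \ge j$, a direct check shows that if $|i - j| \ge 2$ the local triple around the copy of $x_i$ in $\partial_j\gamma$ equals $(x_{i-1}, x_i, x_{i+1})$, identical to that in $\gamma$. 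So smoothness of $x_i$ is preserved for all such $i$.

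Thus the only candidates for loss of smoothness are the immediate neighbors $x_{j-1}$ and $x_{j+1}$, whose local triples change from $(x_{j-2}, x_{j-1}, x_j)$ and $(x_j, x_{j+1}, x_{j+2})$ in $\gamma$ to $(x_{j-2}, x_{j-1}, x_{j+1})$ and $(x_{j-1}, x_{j+1}, x_{j+2})$ in $\partial_j\gamma$, respectively. Each of these two points may or may not still be smooth in $\partial_j\gamma$, contributing a loss of at most one smooth point each. Combined with the removal of $x_j$ itself, this gives
$$
\sigma(\partial_j\gamma) = (p-1) - A - B, \qquad A, B \in \{0, 1\},
$$
so $\sigma(\partial_j\gamma) \in \{p-1, p-2, p-3\}$, which yields the claim. (The $4$-cut example preceding the lemma shows that the value $p-2$ genuinely occurs, and a similar construction yields $p-3$.)

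There is no real obstacle: the argument is bookkeeping on local triples, with Lemma \ref{lem:singular_pts_persist_in_boundary} doing the nontrivial work by ruling out the possibility that a singular point becomes smooth. The only mild care needed is the boundary case when $j \in \{1, n-1\}$, where one of $x_{j-1}, x_{j+1}$ is an endpoint and so was never smooth in $\gamma$ to begin with; in that case the corresponding $A$ or $B$ is automatically zero, which only strengthens the bound.
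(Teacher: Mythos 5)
Your argument is correct and coincides with the paper's own proof: both fix a smooth point $x_j$, observe that points $x_i$ with $|i-j|\ge 2$ keep their local triple and hence their smooth/singular status (with Lemma \ref{lem:singular_pts_persist_in_boundary} ensuring no singular point becomes smooth), and conclude that only $x_{j-1}$ and $x_{j+1}$ can additionally lose smoothness, giving $\sigma(\partial_j\gamma)\in\{p-1,p-2,p-3\}$. Your explicit treatment of the boundary cases $j\in\{1,n-1\}$ is a minor extra care not spelled out in the paper, but the approach is essentially identical.
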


\begin{proof}
It is enough to verify the lemma for $n \ge 2$. Suppose that $x_j$ is a smooth point in $\gamma = \langle x_0, \cdots, x_n\rangle$. In the chain $\partial_j \gamma = \langle x_0, \cdots, x_{j-1}, x_{j+1}, \cdots, x_n \rangle$, the point $x_i$ with $i = 0, \cdots, j-2, j+2, \cdots, n$ is smooth (resp.\ singular) if $x_i$ is originally smooth (resp.\ singular) in $\gamma$. In other words, only $x_{j-1}$ and $x_{j+1}$ have the chance to become singular from smooth. Since the smooth point $x_j$ is already removed from $\gamma$, the number of smooth points in $\partial_j\gamma$ is $p-1$, $p-2$ or $p-3$, as stated.
\end{proof}

\begin{proof}
[The proof of Theorem \ref{thm:general_triviality_of_differential}]
(a)
By construction, $E^0_{p, q}$ is defined as
$$
E^0_{p, q} = F_pC^\ell_{p+q}/F_{p-1}C^\ell_{p+q},
$$
and the differential $d^0 : E^0_{p, q} \to E^0_{p, q-1}$ is the homomorphism
$$
F_pC^\ell_{p+q}/F_{p-1}C^\ell_{p+q} \to 
F_pC^\ell_{p+q-1}/F_{p-1}C^\ell_{p+q-1}
$$ 
induced from $\partial : F_pC^\ell_{p+q} \to F_pC^\ell_{p+q-1}$. As shown in  Lemma \ref{lem:estimate_of_smooth_points_in_boundary}, if $\gamma \in P^\ell_{p+q}$ is a $(p+q)$-chain with $p$ smooth points, then $\partial \gamma$ is a linear combination of $(p+q-1)$-chains with at most $p-1$ smooth points. This means $\partial \gamma \in F_{p-1}C^\ell_{p+q-1}$ and $d^0\gamma = 0$ in $E^0_{p-1, q}$. 

(b)
By the construction of the spectral sequence, we can represent an element in $E^4_{p, q}$ by $\gamma \in C^\ell_{p+q}$ such that $\partial \gamma$ is a linear combination of $(p+q-1)$-chains each of which contains at most $p-4$ smooth points. Then $d^4[\gamma] \in E^4_{p-4, q + 3}$ is represented by $\partial \gamma$. However, $\partial \gamma$ is a linear combination of $(p+q-1)$-chains which have at least $p-3$ smooth points, by Lemma \ref{lem:estimate_of_smooth_points_in_boundary}. Therefore $d^4[\gamma] = 0$ in $E^4_{p-4, q + 3}$. 
\end{proof}


\subsection{A description of $E^1$-term}

Since each proper chain has its definite number of smooth points, we can identify $E^1_{p, q} = F_pC^\ell_{p+q}/F_{p-1}C^\ell_{p+q}$ with the free abelian group generated on proper $(p+q)$-chains of length $\ell$ which contain exactly $p$ smooth points. Under this identification, we get a decomposition as an abelian group
$$
C^\ell_n = \bigoplus_{p + q = n} E^1_{p, q}.
$$
This decomposition is generally not compatible with $\partial$. Its compression induces the boundary map $d^1 : E^1_{p, q} \to E^1_{p-1, q}$. Concretely, if $\gamma \in P^\ell_{p+q}$ is a proper chain with $\sigma(\gamma) = p$, then
$$
d^1\gamma
= \sum_i (-1)^i \partial_i \gamma,
$$
where $i$ runs over $1, \cdots, n -1$ such that $\ell(\partial_i\gamma) = \ell(\gamma) = \ell$ and $\sigma(\partial_i\gamma) = \sigma(\gamma) - 1 = p - 1$. 

\medskip

In the description above, it is clear that $d^1$ preserves the singular points in proper chains. This fact suggests that a focus on singular points is a key to understand the chain complex $(E^1, d^1)$. For this aim, recall from \cite{K-Y} that the \textit{frame} $\Fr(\gamma)$ of a proper $n$-chain $\gamma = \langle x_0, \cdots, x_n \rangle \in P_n$ is the $(n - \sigma(\gamma))$-chain of all the singular points in $\gamma$, given by removing all the smooth points in $\gamma$. Notice that $\Fr(\gamma)$ may be no longer a proper chain. Recall also that the set of all (possibly improper) $q$-chains is denoted by $\widehat{P}_q$. 

\begin{dfn}
Given $\varphi \in \widehat{P}_q$ as well as $n$ and $\ell$, we define
\begin{align*}
P^{\ell}_n(\varphi) &= \{ \gamma \in P^\ell_n |\ \Fr(\gamma) = \varphi \}, &
C^{\ell}_n(\varphi) &= \bigoplus_{\gamma \in P^\ell_n(\varphi)} \Z \gamma.
\end{align*}
\end{dfn}

When $P^\ell_n(\varphi) = \emptyset$, we understand $C^\ell_n(\varphi) = 0$. Since a chain which can be a frame of another (proper) chain has at least two points, we have $P^\ell_n(\varphi) = \emptyset$ and $C^\ell_n(\varphi) = 0$ for $\varphi = \langle \varphi_0 \rangle \in \widehat{P}_0$. In general, the length of a proper chain $\gamma \in P^\ell_n(\varphi)$ has the bound $\ell(\gamma) \ge \ell(\varphi)$. Hence $C^\ell_n(\varphi) = 0$ for $\ell < \ell(\varphi)$. We have $C^\ell_n(\varphi) = 0$ as well for any $\varphi \in \widehat{P}_q$ and $n < q$. Actually, if there exists $\gamma \in P^\ell_n(\varphi)$ for $\varphi \in \widehat{P}_q$ given, then $n - \sigma(\gamma) = q$. This fact also shows $C^\ell_{p+q}(\varphi) \subset E^1_{p, q}$ for $\varphi \in \widehat{P}_q$.

\begin{thm} \label{thm:direct_sum_decomposition_of_E1}
Let $\ell > 0$ be a real number, and $q > 0$ an integer.
\begin{itemize}
\item[(a)]
For each $\varphi \in \widehat{P}_q$, the groups $C^\ell_{p+q}(\varphi)$ constitute a subcomplex $C^\ell_{* + q}(\varphi) = \bigoplus_{p} C^\ell_{p+q}(\varphi)$ of the chain complex $(E^1_{*, q}, d^1)$.

\item[(b)]
Moreover, we have a direct sum decomposition of the chain complex
$$
E^1_{p, q} = \bigoplus_{\varphi \in \widehat{P}_q}
C^\ell_{p+q}(\varphi).
$$
In particular, $E^1_{p, 0} = 0$ for all $p$.
\end{itemize}
\end{thm}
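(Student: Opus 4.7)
The plan is to exploit the fact that $d^1$ preserves the frame of a proper chain, and then use frames to index the direct sum decomposition.

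For part (a), let $\gamma \in C^\ell_{p+q}(\varphi)$, so $\sigma(\gamma) = p$ and $\Fr(\gamma) = \varphi$. From the explicit formula for $d^1$ given just above Definition of $C^\ell_n(\varphi)$, the surviving terms $\partial_i\gamma$ are exactly those with $\ell(\partial_i\gamma) = \ell$ (equivalently, $x_i$ smooth in $\gamma$) and $\sigma(\partial_i\gamma) = p-1$. I would show that such a $\partial_i\gamma$ has the same frame $\varphi$. By Lemma \ref{lem:singular_pts_persist_in_boundary}, every singular point of $\gamma$ is still singular in $\partial_i\gamma$. Moreover, because the smooth count drops by exactly one upon removing the smooth point $x_i$, no previously smooth point of $\gamma$ can have turned singular; otherwise $\sigma(\partial_i\gamma)$ would be $p-2$ or $p-3$ according to Lemma \ref{lem:estimate_of_smooth_points_in_boundary}. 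Thus the singular points of $\partial_i\gamma$ are exactly those of $\gamma$ in the same order, giving $\Fr(\partial_i\gamma) = \varphi$, so $d^1\gamma \in C^\ell_{p+q-1}(\varphi)$.

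For part (b), every proper $(p+q)$-chain $\gamma$ with $\sigma(\gamma) = p$ contains exactly $q+1$ singular points, so $\Fr(\gamma)$ is a well-defined element of $\widehat{P}_q$, and distinct frames give disjoint generating sets. This partitions the free generators of $E^1_{p,q}$ and produces the abelian-group decomposition $E^1_{p,q} = \bigoplus_{\varphi \in \widehat{P}_q} C^\ell_{p+q}(\varphi)$, which is a decomposition of chain complexes by part (a). For the final assertion, $q = 0$ would force the frame to be a single-point chain $\langle \varphi_0 \rangle \in \widehat{P}_0$, and for such $\varphi$ we already observed $C^\ell_n(\varphi) = 0$, since the two endpoint positions of any proper chain contribute at least two singular points to its frame; hence $E^1_{p,0} = 0$.

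The only delicate point is the frame-preservation argument in part (a), where Lemmas \ref{lem:singular_pts_persist_in_boundary} and \ref{lem:estimate_of_smooth_points_in_boundary} must be combined to rule out both directions of smooth/singular exchange among the interior points. Once frames are stable under the surviving face maps, the direct sum decomposition is essentially a matter of sorting basis elements by $\Fr$.
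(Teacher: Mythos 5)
Your proof is correct and takes essentially the same route as the paper: $d^1$ preserves the frame of a proper chain, so the generators of $E^1_{p,q}$ can be sorted by their frames, and the $q=0$ case is excluded because a frame of a proper chain (of positive length) has at least two points. Your counting argument---that $\sigma(\partial_i\gamma)=p-1$ together with the persistence of singular points forces the singular set of $\partial_i\gamma$ to coincide with that of $\gamma$---simply makes explicit the step the paper records as ``$d^1$ preserves the singular points in proper chains.''
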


\begin{proof}
For (a), recall that $d^1$ preserves the singular points in proper chains. Thus, if $\gamma \in P^\ell_{p+q}(\varphi)$, then $d^1 \gamma$ is a linear combination of chains in $P^\ell_{p+q-1}(\varphi)$, namely $d^1\gamma \in C^\ell_{p+q-1}(\varphi)$. For (b), let $\gamma \in E^1_{p, q}$ be a proper $(p+q)$-chain such that $\ell(\gamma) =\ell$ and $\sigma(\gamma) = p$. Then we have $\Fr(\gamma) \in \widehat{P}_q$ and $\gamma \in P^\ell_{p+q}(\Fr(\gamma))$.
\end{proof}

\begin{thm} \label{thm:tensor_product_decomposition}
Let $\varphi = \langle \varphi_0, \cdots, \varphi_q \rangle \in \widehat{P}_q$ be given. Then there is an injective chain map from $(C^\ell_*(\varphi), d^1)$ to the direct sum of tensor products of the chain complexes $(C^{\ell_j}_*(\langle \varphi_j, \varphi_{j+1} \rangle), d^1)$ 
$$
\iota :\ 
C^\ell_n(\varphi)
\longrightarrow 
\bigoplus_{\ell = \sum_j \ell_j} \bigoplus_{n = \sum_j n_j}
C^{\ell_1}_{n_1}(\langle \varphi_0, \varphi_1 \rangle) \otimes \cdots \otimes
C^{\ell_{q}}_{n_{q}}(\langle \varphi_{q-1}, \varphi_q \rangle).
$$
\end{thm}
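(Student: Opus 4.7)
The plan is to build $\iota$ by splitting each proper chain along its singular points. Since the frame $\Fr(\gamma) = \varphi = \langle \varphi_0, \ldots, \varphi_q \rangle$ lists precisely the singular points of $\gamma \in P^\ell_n(\varphi)$ in their order of appearance, $\gamma$ decomposes uniquely as a concatenation of $q$ sub-chains $\gamma_1, \gamma_2, \ldots, \gamma_q$, with $\gamma_j$ running from $\varphi_{j-1}$ to $\varphi_j$ and containing in between exactly those smooth points of $\gamma$ lying strictly between $\varphi_{j-1}$ and $\varphi_j$. Each $\gamma_j$ is itself a proper chain of some dimension $n_j$ and length $\ell_j$; its interior points remain smooth in $\gamma_j$ because smoothness depends only on immediate neighbours, which are the same in $\gamma_j$ as in $\gamma$, while $\varphi_{j-1}$ and $\varphi_j$ are singular as endpoints. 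Hence $\gamma_j \in P^{\ell_j}_{n_j}(\langle \varphi_{j-1}, \varphi_j \rangle)$, and the bookkeeping of points and distances yields $\sum_j n_j = n$ and $\sum_j \ell_j = \ell$. Define $\iota(\gamma) = \gamma_1 \otimes \cdots \otimes \gamma_q$ and extend $\Z$-linearly. Injectivity is immediate: the tensor factors reconstruct $\gamma$ by concatenation, and distinct basis elements in the domain map to distinct basis elements in the codomain.

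The substance is showing $\iota$ intertwines $d^1$ with the tensor differential. Fix a smooth point $s$ of $\gamma$; it lies strictly between some $\varphi_{j-1}$ and $\varphi_j$, and so appears also as a smooth point of $\gamma_j$. Removing $s$ can change the smoothness of at most its two immediate neighbours in $\gamma$, both of which sit in $\gamma_j$; those neighbours that happen to be $\varphi_{j-1}$ or $\varphi_j$ are singular and stay singular by Lemma \ref{lem:singular_pts_persist_in_boundary}, while any neighbour that is an interior point of $\gamma_j$ has an identical neighbourhood in $\gamma$ and in $\gamma_j$. Consequently $\sigma(\partial_s \gamma) = \sigma(\gamma) - 1$ if and only if $\sigma(\partial_s \gamma_j) = \sigma(\gamma_j) - 1$, so the terms of $d^1\gamma$ arising from $s$ correspond bijectively to those of $d^1 \gamma_j$ arising from $s$; and when a term survives, $\partial_s \gamma$ is literally the concatenation of $\gamma_1, \ldots, \gamma_{j-1}, \partial_s \gamma_j, \gamma_{j+1}, \ldots, \gamma_q$, so its image under $\iota$ is $\gamma_1 \otimes \cdots \otimes \partial_s \gamma_j \otimes \cdots \otimes \gamma_q$.

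For the signs, a direct index count---using that adjacent segments share the endpoint $\varphi_{j-1}$, so the first $j-1$ segments together contribute $\sum_{m=1}^{j-1} n_m$ indices in $\gamma$---shows that the position of $s$ in $\gamma$ equals $\sum_{m=1}^{j-1} n_m + i'$, where $i'$ is its position in $\gamma_j$. Hence $(-1)^i = (-1)^{n_1 + \cdots + n_{j-1}} \cdot (-1)^{i'}$, which is exactly the Koszul sign that the tensor differential attaches to differentiating the $j$-th factor. Summing over smooth points $s$ and grouping by segment then gives $\iota(d^1\gamma) = d^\otimes \iota(\gamma)$. The principal obstacle is the locality claim above: that deleting a smooth point of segment $j$ cannot alter the smoothness of any point outside segment $j$. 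Once that is pinned down by the two-neighbour observation, the rest is routine bookkeeping of lengths, dimensions, and Koszul signs.
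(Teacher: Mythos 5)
Your proposal is correct and follows the same route as the paper: split each chain at its frame (singular) points to define $\iota$, observe injectivity on basis elements, and check compatibility of $d^1$ with the tensor differential via the locality of smoothness and the Koszul signs. The paper's own proof merely asserts this verification, while you spell out the two-neighbour locality argument and the sign count $i = n_1 + \cdots + n_{j-1} + i'$, which is exactly what "appropriate signs" refers to.
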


\begin{proof}
The injective homomorphism $\iota$ of abelian groups is given by
\begin{gather*}
\langle \varphi_0, x^1_1, \cdot\cdot, x^1_{n_1 - 1}, \varphi_1, 
x^2_1, \cdot\cdot, x^2_{n_2 - 1}, \varphi_2, \cdots,
\varphi_{q-1}, x^{q}_1, \cdot\cdot, x^{q}_{n_{q} - 1}, \varphi_q \rangle \\
\downarrow \\
\langle \varphi_0, x^1_1, \cdot\cdot, x^1_{n_1-1}, \varphi_1 \rangle \otimes
\langle \varphi_1, x^2_1, \cdot\cdot, x^2_{n_2-1}, \varphi_2 \rangle \otimes 
\cdots \otimes
\langle \varphi_{q-1}, x^q_1, \cdot\cdot, x^q_{n_q - 1}, \varphi_q \rangle.
\end{gather*}
This homomorphism can be seen to be a chain map with respect to $d^1$ on $C^\ell_*(\varphi)$ and a tensor product of $d^1$ on $C^{\ell_j}_*(\langle y_j, y_{j+1} \rangle)$ with appropriate signs.
\end{proof}

The injective homomorphism $\iota$ in Theorem \ref{thm:tensor_product_decomposition} fails to be surjective in the following case for example: Suppose that 
\begin{align*}
\gamma &= 
\langle \varphi_0, x_1, \cdots, x_{n-1}, \varphi_1 \rangle 
\in C^\ell_n(\langle \varphi_0, \varphi_1 \rangle), \\
\gamma' &= 
\langle \varphi_1, z_1, \cdots, z_{n'-1}, \varphi_2 \rangle 
\in C^{\ell'}_{n'}(\langle \varphi_1, \varphi_2 \rangle)
\end{align*}
are such that $x_{n-1} < \varphi_1 < z_1$. Then $\gamma \otimes \gamma'$ is not in the image of $\iota$, since 
$$
\langle \varphi_0, x_1, \cdots, x_{n-1}, 
\varphi_1, z_1, \cdots, z_{n'-1}, \varphi_2 \rangle
$$ 
does not belong to $C^{\ell + \ell'}_{n+n'}(\langle \varphi_0, \varphi_1, \varphi_2 \rangle)$. The injective chain map $\iota$ also fails to be a quasi-isomorphism generally, as will be seen in an example in \S\S\ref{subsec:example}.

\medskip

Notice that the range of $\iota$ has the direct sum decomposition with respect to the decomposition of the length $\ell$. This induces a further decomposition of $C^\ell_*(\varphi)$ through the injective map $\iota$.

\begin{dfn} \label{dfn:refined_direct_sum_decomposition}
Let $q \ge 1$ be an integer, and $\varphi = \langle \varphi_0, \cdots, \varphi_q \rangle \in \widehat{P}_q$ a $q$-chain. For positive real numbers $\ell_1, \cdots, \ell_{q}$ and non-negative integers $n_1, \cdots, n_{q}$, we define 
$$
C^{\ell_1, \cdots, \ell_{q}}_{n_1, \cdots, n_q}(\varphi)
\subset C^{\ell_1 + \cdots + \ell_{q}}_{n_1 + \cdots + n_{q}}(\varphi)
$$
to be the subgroup generated by the $(n_1 + \cdots + n_{q})$-chains of length $\ell_1 + \cdots + \ell_{q}$ 
$$
\langle \varphi_0, x^1_1, \cdots, x^1_{n_1 - 1}, \varphi_1, 
x^2_1, \cdots, x^2_{n_2 - 1}, \varphi_2, \cdots,
\varphi_{q-1}, x^{q}_1, \cdots, x^{q}_{n_{q} - 1}, \varphi_q \rangle
$$
such that $\ell(\langle \varphi_{j-1}, x^j_1, \cdots, x^j_{n_j - 1}, \varphi_j \rangle) = \ell_j$ for $j = 1, \cdots, q$. We also define
$$
C^{\ell_1, \cdots, \ell_{q}}_n(\varphi)
= \bigoplus_{n = n_1 + \cdots + n_{q}}
C^{\ell_1, \cdots, \ell_{q}}_{n_1, \cdots, n_q}(\varphi).
$$
\end{dfn}

As a result of Theorem \ref{thm:tensor_product_decomposition}, we get

\begin{cor} \label{cor:tensor_product_decomposition}
Let $q \ge 1$ be an integer, and $\varphi = \langle \varphi_0, \cdots, \varphi_q \rangle \in \widehat{P}_q$ a $q$-chain. For positive real numbers $\ell_1, \cdots, \ell_{q}$, we have a direct sum decomposition
$$
C^\ell_n(\varphi)
=
\bigoplus_{\ell = \sum_j \ell_j}
C^{\ell_1, \cdots, \ell_{q}}_{n}(\varphi)
= 
\bigoplus_{\ell = \sum_j \ell_j} \bigoplus_{n = \sum_j n_j}
C^{\ell_1, \cdots, \ell_{q}}_{n_1, \cdots, n_{q}}(\varphi),
$$
and $\iota$ in Theorem \ref{thm:tensor_product_decomposition} restricts to an injective chain map
$$
\iota : \ 
C^{\ell_1, \cdots, \ell_{q}}_n(\varphi)
\longrightarrow
\bigoplus_{n = \sum_j n_j}
C^{\ell_1}_{n_1}(\langle \varphi_0, \varphi_1 \rangle) \otimes \cdots \otimes
C^{\ell_{q}}_{n_{q}}(\langle \varphi_{q-1}, \varphi_q \rangle).
$$
\end{cor}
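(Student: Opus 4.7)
The plan is to observe that the decomposition by tuples $(\ell_1,\ldots,\ell_q)$ and $(n_1,\ldots,n_q)$ is already visible on each generator of $C^\ell_n(\varphi)$, and then to note that $d^1$ respects these refinements because it only deletes smooth points, which live strictly inside one of the $q$ segments cut out by the singular anchors $\varphi_0,\ldots,\varphi_q$.

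First, any generator of $C^\ell_n(\varphi)$ is by definition of the frame a proper $n$-chain of the form
$$
\gamma = \langle \varphi_0, x^1_1, \ldots, x^1_{n_1-1}, \varphi_1, \ldots, \varphi_{q-1}, x^q_1, \ldots, x^q_{n_q-1}, \varphi_q \rangle,
$$
in which the interior points $x^j_k$ are exactly the smooth points of $\gamma$. The tuple $(n_1,\ldots,n_q)$ with $\sum_j n_j = n$ is uniquely determined by $\gamma$, and by additivity of length the tuple $\ell_j = \ell(\langle \varphi_{j-1}, x^j_1, \ldots, x^j_{n_j-1}, \varphi_j \rangle)$ satisfies $\sum_j \ell_j = \ell$ and is uniquely determined as well. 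Hence $\gamma$ belongs to exactly one $C^{\ell_1,\ldots,\ell_q}_{n_1,\ldots,n_q}(\varphi)$, yielding the claimed direct sum decomposition of abelian groups.

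Next, to upgrade the decomposition to a decomposition of chain complexes, recall that $d^1$ sums over $(-1)^i \partial_i\gamma$ for indices $i$ at which $x_i$ is smooth in $\gamma$ and the length is preserved. In the notation above, every such $i$ corresponds to deleting one of the interior points $x^j_k$ inside a single segment: the anchors $\varphi_{j-1},\varphi_j$ are untouched, the length $\ell_j$ of that segment is unchanged because the removed point is smooth within $\gamma$, and the number of edges of the $j$-th segment drops from $n_j$ to $n_j-1$, while all other segments are unaffected. Therefore
$$
d^1 \colon C^{\ell_1,\ldots,\ell_q}_{n_1,\ldots,n_q}(\varphi) \longrightarrow \bigoplus_{j=1}^{q} C^{\ell_1,\ldots,\ell_q}_{n_1,\ldots,n_j-1,\ldots,n_q}(\varphi),
$$
so $C^{\ell_1,\ldots,\ell_q}_*(\varphi)$ is a subcomplex of $C^\ell_*(\varphi)$.

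Finally, for the restriction of $\iota$: applying the formula of Theorem~\ref{thm:tensor_product_decomposition} to a generator of $C^{\ell_1,\ldots,\ell_q}_{n_1,\ldots,n_q}(\varphi)$ produces a pure tensor whose $j$-th factor is a proper $n_j$-chain of length precisely $\ell_j$ with endpoints $\varphi_{j-1}$ and $\varphi_j$, hence lies in $C^{\ell_j}_{n_j}(\langle \varphi_{j-1}, \varphi_j \rangle)$. Injectivity and the chain-map property are inherited directly from Theorem~\ref{thm:tensor_product_decomposition}. There is essentially no obstacle in this argument; the entire content is bookkeeping on top of Theorem~\ref{thm:tensor_product_decomposition}, and the only substantive point is the already-noted fact that smooth-point removal preserves the length of the segment in which it occurs.
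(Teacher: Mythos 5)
Your argument is correct and is essentially the paper's: the paper treats the corollary as immediate bookkeeping on top of Theorem \ref{thm:tensor_product_decomposition} (the range of $\iota$ decomposes by the splitting of the length, and this pulls back to $C^\ell_*(\varphi)$), which is exactly the content you spell out directly on generators. One small caution: the compressed differential $d^1$ keeps only those $\partial_i\gamma$ with $\sigma(\partial_i\gamma)=\sigma(\gamma)-1$ in addition to $\ell(\partial_i\gamma)=\ell$, and it is this extra condition (equivalently, frame preservation, as in Theorem \ref{thm:direct_sum_decomposition_of_E1}(a)) that guarantees every term of $d^1\gamma$ still has frame $\varphi$; your observation that removing a smooth point leaves its segment's length $\ell_j$ unchanged then yields preservation of the refined grading.
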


\medskip

Theorem \ref{thm:direct_sum_decomposition_of_E1} and Theorem \ref{thm:tensor_product_decomposition} are parallel to some results in \cite{K-Y}. To explain this, let $\varphi \in \widehat{P}_q$ be a $q$-chain, and focus on the following subset
$$
P^{\ell(\varphi)}_n(\varphi)
= \{ \gamma \in P^{\ell(\varphi)}_n |\ \Fr(\gamma) = \varphi \}
\subset P^{\ell(\varphi)}_n
$$
It can be seen that $P^{\ell(\varphi)}_n(\varphi) = \emptyset$ if $\varphi \not\in P_q \subset \widehat{P}_q$. By definition, an $n$-chain $\gamma \in P^{\ell(\varphi)}_n(\varphi)$ satisfies $\Fr(\gamma) = \varphi$ and $\ell(\gamma) = \ell(\Fr(\gamma))$. In \cite{K-Y}, a proper chain $\gamma \in P_n$ is called \textit{geodesically simple} if $\ell(\gamma) = \ell(\Fr(\gamma))$. Therefore $P^{\ell(\varphi)}_n(\varphi)$ consists of geodesically simple chains, and a geodesically simple $n$-chain $\gamma$ belongs to $P^\ell_n(\Fr(\gamma))$ with $\ell = \ell(\gamma) = \ell(\Fr(\gamma))$. A fact shown in \cite{K-Y} is that the group
$$
C^{\ell(\varphi)}_n(\varphi)
= \bigoplus_{\gamma \in P^{\ell(\varphi)}_n(\varphi)} \Z\gamma
$$
forms a complex $C^{\ell(\varphi)}_*(\varphi)$ with respect to $\partial$. Thus, the geodesically simple chains form a subcomplex of the magnitude chain complex, and this subcomplex admits the direct sum decomposition
$$
\bigoplus_{\stackrel{\gamma \in P^\ell}
{\mbox{\small{geodesically simple}}}}
\Z \gamma
= \bigoplus_{\varphi \in P} C^{\ell(\varphi)}_*(\varphi),
$$
where $P^\ell = \bigcup_n P^\ell_n$ and $P = \bigcup_q P_q$.

It holds that $\partial = d^1$ on $C^{\ell(\varphi)}_*(\varphi) \subset C^{\ell(\varphi)}_*$. Therefore one can think of the complex of geodesically simple chains as a subcomplex of $(E^1, d^1)$. Then the above decomposition of the complex of geodesically simple chains is compatible with Theorem \ref{thm:direct_sum_decomposition_of_E1}. It is known in \cite{K-Y} that each direct summand $C^{\ell(\varphi)}_*(\varphi)$ admits a tensor product decomposition as a chain complex with respect to $\partial = d^1$, whereas Theorem \ref{thm:tensor_product_decomposition} only gives a injective chain map into a tensor product. 

Thus, there are generally some differences between our results and those in \cite{K-Y}. However, in a setup which forces all the proper chains to be geodesically simple (to be considered shortly), Theorem \ref{thm:direct_sum_decomposition_of_E1} and Theorem \ref{thm:tensor_product_decomposition} essentially agree with decompositions given in \cite{K-Y}. It should be noticed here that there is a crucial result in \cite{K-Y} which is not generalized in this paper: This is a description of the chain complex $C^{d(x, y)}_*(\langle x, y \rangle)$ in terms of the order complex of a poset associated to the interval $\langle x, y \rangle$.


\subsection{$E^2$-degeneracy and absence of $4$-cuts}

Following \cite{K-Y}, we introduce a positive number $m_X$ as follows
$$
m_X = \mathrm{inf}
\{ \ell(\langle x_0, x_1, x_2, x_3 \rangle) |\
\mbox{$\langle x_0, x_1, x_2, x_3 \rangle$ is a $4$-cut of $X$} \}.
$$
In the case where $X$ has no $4$-cut, then we set $m_X = + \infty$.

\begin{thm} \label{thm:E2_degeneracy}
Let $(X, d)$ be a metric space, and $\ell$ a real number such that $0 < \ell < m_X$. Then the smoothness spectral sequence of $H^\ell_*(X)$ degenerates at the $E^2$-term: $E^\infty_{p, q} = E^2_{p, q}$. Further, we have the direct sum decomposition
$$
H^\ell_n(X) = E_{0, n}^\infty \oplus \cdots \oplus E_{n-1, 1}^\infty.
$$
\end{thm}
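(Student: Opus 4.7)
The plan is to use the hypothesis $\ell < m_X$ to show that $\partial$ decreases the number of smooth points by exactly $1$, which splits the magnitude complex $C^\ell_*$ into a direct sum of subcomplexes indexed by $n - \sigma(\gamma)$. Both the degeneracy and the direct sum decomposition then follow by straightforward bookkeeping on the spectral sequence.

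First I would establish the key reduction: for every proper chain $\gamma \in P^\ell_n$ and every smooth point $x_j$ in $\gamma$, one has $\sigma(\partial_j\gamma) = \sigma(\gamma) - 1$. By Lemma \ref{lem:estimate_of_smooth_points_in_boundary}, the only other possibility is that a neighboring smooth point $x_{j-1}$ or $x_{j+1}$ becomes singular in $\partial_j\gamma$. Suppose, towards a contradiction, that $x_{j-1}$ was smooth in $\gamma$ and is singular in $\partial_j\gamma$. The smoothness of $x_{j-1}$ and $x_j$ in $\gamma$ gives
$$
d(x_{j-2}, x_j) = d(x_{j-2}, x_{j-1}) + d(x_{j-1}, x_j), \qquad d(x_{j-1}, x_{j+1}) = d(x_{j-1}, x_j) + d(x_j, x_{j+1}),
$$
while the singularity of $x_{j-1}$ in $\partial_j\gamma$ gives $d(x_{j-2}, x_{j+1}) < d(x_{j-2}, x_{j-1}) + d(x_{j-1}, x_{j+1})$. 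Combining these shows that $\langle x_{j-2}, x_{j-1}, x_j, x_{j+1} \rangle$ is a $4$-cut in the sense of the example preceding Section \ref{sec:filtration}, and its length is at most $\ell(\gamma) = \ell$, contradicting $\ell < m_X$. The case of $x_{j+1}$ is symmetric.

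Next I would leverage this to split the complex. Let $C^\ell_{n,p}$ denote the free abelian subgroup of $C^\ell_n$ generated by proper $n$-chains with $\sigma(\gamma) = p$, so that $F_pC^\ell_n = \bigoplus_{p' \le p} C^\ell_{n, p'}$. The key reduction forces $\partial$ to restrict to a map $C^\ell_{n, p} \to C^\ell_{n-1, p-1}$, so setting $D^q_n := C^\ell_{n, n-q}$ yields a direct sum decomposition of chain complexes
$$
(C^\ell_*, \partial) = \bigoplus_{q \ge 1}(D^q_*, \partial).
$$
Under the identification $E^1_{p, q} = C^\ell_{p+q, p} = D^q_{p+q}$, the differential $d^1$ is just $\partial$ on $D^q_*$. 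Each summand $D^q_*$ is concentrated on the single row $q$ of the $E^1$-page, so any higher differential $d^r$ ($r \ge 2$) must shift the row index and hence vanishes, giving $E^\infty_{p, q} = E^2_{p, q} = H_{p+q}(D^q_*)$. Taking homology of the splitting of $C^\ell_*$ finally produces
$$
H^\ell_n(X) = \bigoplus_{q \ge 1} H_n(D^q_*) = \bigoplus_{p=0}^{n-1} E^\infty_{p, n-p},
$$
as claimed.

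The only genuinely delicate step is the $4$-cut extraction in the first paragraph; after that, everything reduces to the observation that a filtration coming from a direct sum splitting of the chain complex automatically yields $E^2$-degeneracy with split extensions.
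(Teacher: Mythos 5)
Your proposal is correct and follows essentially the same route as the paper: the hypothesis $\ell < m_X$ rules out $4$-cuts among chains of length $\ell$, so no smooth point becomes singular under $\partial_j$, the boundary drops the smooth count by exactly one, and the resulting splitting of $(C^\ell_*,\partial)$ into row-wise subcomplexes forces $E^2$-degeneracy and the split extensions. You merely spell out two points the paper states briefly, namely the explicit extraction of the $4$-cut $\langle x_{j-2}, x_{j-1}, x_j, x_{j+1}\rangle$ and the packaging of the degeneracy via the direct-sum decomposition of the filtered complex, both of which are sound.
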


\begin{proof}
The assumption $\ell < m_X$ implies that there is no $4$-cut in the proper chains of length $\ell$, and all the proper chains are geodesically simple. Accordingly, if a proper $n$-chain $\gamma \in P^\ell_n$ contains $p$ smooth points, then its boundary $\partial \gamma$ is a linear combination of proper $(n-1)$-chains which contain exactly $p-1$ smooth points. (No smooth point turns into singular.) 

Now, in view of the construction of the spectral sequence, the condition for $\gamma \in E^1_{p, q} = F_pC^\ell_{p+q}/F_{p-1}C^\ell_{p+q}$ to satisfy $d^1\gamma = 0$ is that $\partial \gamma \in F_{p-2}C^\ell_{p+q-1}$. Then $\gamma$ represents an element of $E^2_{p, q}$, and its image under $d^2$, which is in $E^2_{p-2, q+1}$, is represented by $\partial \gamma \in F_{p-2}C_{p+q-1}$. But, $\gamma$ is a linear combination of proper chains containing $p$ smooth points, and $\partial \gamma$ is a linear combination of proper chains containing $p-1$ smooth points by the present assumption. Hence $\partial \gamma = 0$ in $F_{p-2}C_{p+q-1}$, and $d_2$ is trivial. In the same way, $\partial \gamma = 0$ in $F_{p - r}C_{p+q-1}$ for $r \ge 2$, and $d_r$ is trivial. This establishes $E^2_{p, q} = E^r_{p, q}$ for $r \ge 2$.

For the decomposition of $H^\ell_n$, we just observe that the decomposition 
$$
C^\ell_n = \bigoplus_p E^1_{p, n - p}
$$
is compatible with $\partial$ under the present assumption.
\end{proof}

As is mentioned already, if $\ell < m_X$, then all the computations about $(E^1, d^1)$ can be done in a way developed in \cite{K-Y}.


\subsection{$E^3$-degeneracy and absence of overlaps of $4$-cuts}

In the proof of Theorem \ref{thm:E2_degeneracy}, it is observed that a $4$-cut is related to the differential $d^2 : E^2_{p, q} \to E^2_{p-2, q + 1}$ of the $E^2$-terms. A similar consideration is possible for $d^3 : E^3_{p, q} \to E^3_{p-3, q + 1}$, the remaining differential which is possibly non-trivial.

\begin{lem} \label{lem:overlap_of_4cuts}
For a proper $4$-chain $\langle x_0, x_1, x_2, x_3, x_4 \rangle$, the following are equivalent:
\begin{itemize}
\item[(a)]
$x_1, x_2, x_3$ are smooth points in $\langle x_0, x_1, x_2, x_3, x_4 \rangle$, but $x_1, x_3$ are singular points in $\langle x_0, x_1, x_3, x_4 \rangle$.

\item[(b)]
$\langle x_0, x_1, x_2, x_3 \rangle$ and $\langle x_1, x_2, x_3, x_4 \rangle$ are $4$-cuts.

\item[(c)]
It holds that
\begin{align*}
d(x_0, x_1) + d(x_1, x_2) &= d(x_0, x_2), \\
d(x_1, x_2) + d(x_2, x_3) &= d(x_1, x_3), \\
d(x_2, x_3) + d(x_3, x_4) &= d(x_2, x_4), \\
d(x_0, x_1) + d(x_1, x_3) &> d(x_0, x_3), \\
d(x_1, x_3) + d(x_3, x_4) &> d(x_1, x_4).
\end{align*}
\end{itemize}
\end{lem}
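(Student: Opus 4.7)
The plan is to observe that (c) is an explicit list of five distance relations, and that (a) and (b) each translate directly into the same five relations. So the proof reduces to unfolding the definitions of ``smooth/singular point'' and of ``$4$-cut''.

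First I would show (a) $\iff$ (c). Since consecutive entries in a proper chain are distinct, the smoothness of $x_1$, $x_2$, $x_3$ in $\langle x_0, x_1, x_2, x_3, x_4 \rangle$ is equivalent to the three equalities $d(x_{i-1}, x_i) + d(x_i, x_{i+1}) = d(x_{i-1}, x_{i+1})$ for $i = 1, 2, 3$, which are precisely the first three lines of (c). Before invoking singularity in the contracted chain $\langle x_0, x_1, x_3, x_4 \rangle$, I would check that this is actually a proper chain, i.e.\ $x_1 \neq x_3$: for if $x_1 = x_3$, then the smoothness of $x_2$ gives $d(x_1, x_2) + d(x_2, x_3) = 0$, forcing $x_1 = x_2 = x_3$ and contradicting $x_1 \neq x_2$. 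Granted this, singularity of $x_1$ and $x_3$ in $\langle x_0, x_1, x_3, x_4 \rangle$ becomes the two strict inequalities in (c), and conversely.

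Next I would show (b) $\iff$ (c). By the definition of a $4$-cut applied to $\langle x_0, x_1, x_2, x_3 \rangle$, one has $x_0 < x_1 < x_2$, $x_1 < x_2 < x_3$, and $d(x_0, x_3) < d(x_0, x_1) + d(x_1, x_2) + d(x_2, x_3)$. Using the two ``$<$'' identities, this last strict inequality collapses to $d(x_0, x_1) + d(x_1, x_3) > d(x_0, x_3)$, as noted in the $4$-cut display in the paper. Applying the same reasoning to $\langle x_1, x_2, x_3, x_4 \rangle$ and observing that the middle equality $d(x_1, x_2) + d(x_2, x_3) = d(x_1, x_3)$ is shared by both, one obtains exactly the five relations of (c); the converse direction just reverses the collapse.

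There is no substantive obstacle here: the only non-bookkeeping point is verifying that $\langle x_0, x_1, x_3, x_4 \rangle$ is proper under the hypothesis of (a), which, as above, is forced by smoothness of $x_2$. Everything else is a line-by-line translation between the three conditions.
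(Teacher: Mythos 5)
Your proposal is correct and matches the paper, whose proof is simply the remark that the verification is straightforward; your line-by-line unfolding of the definitions of smooth/singular and of a $4$-cut is exactly that routine check, with the properness of $\langle x_0, x_1, x_3, x_4 \rangle$ (i.e.\ $x_1 \neq x_3$, forced by smoothness of $x_2$) being the only point worth noting, as you did.
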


\begin{proof}
The verification is straightforward.
\end{proof}

Let us define a positive number $n_X$ by
$$
n_X = \mathrm{inf}\{
\ell(\langle x_0, x_1, x_2, x_3, x_4 \rangle) |\
\mbox{$\langle x_0, x_1, x_2, x_3 \rangle$ and $\langle x_1, x_2, x_3, x_4 \rangle$ are $4$-cuts}
\}.
$$
We set $n_X = + \infty$, when there is no proper $4$-chains $\langle x_0, x_1, x_2, x_3, x_4 \rangle$ such that $\langle x_0, x_1, x_2, x_3 \rangle$ and $\langle x_1, x_2, x_3, x_4 \rangle$ are $4$-cuts. Since the definition of $n_X$ involves $4$-cuts, we get the inequality $m_X \le n_X$.

\begin{thm} \label{thm:E3_degeneracy}
Let $(X, d)$ be a metric space, and $\ell$ a real number such that $0 < \ell < n_X$. Then the smoothness spectral sequence of $H^\ell_*(X)$ degenerates at the $E^3$-term: $E^\infty_{p, q} = E^3_{p, q}$.
\end{thm}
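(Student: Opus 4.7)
The plan is to carry out an argument strictly parallel to the proof of Theorem~\ref{thm:general_triviality_of_differential}(b), with the parameter $4$ replaced by $3$, after first strengthening Lemma~\ref{lem:estimate_of_smooth_points_in_boundary} using the hypothesis $\ell < n_X$.

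First I would establish the following refinement of Lemma~\ref{lem:estimate_of_smooth_points_in_boundary}: if $\gamma \in P^\ell_n$ has $\sigma(\gamma) = p$ and $\ell(\gamma) = \ell < n_X$, then $\partial \gamma$ is a linear combination of proper $(n-1)$-chains each containing exactly $p-1$ or $p-2$ smooth points; the case $p-3$ cannot occur. Tracing the proof of Lemma~\ref{lem:estimate_of_smooth_points_in_boundary}, a summand of $\partial \gamma$ with $p-3$ smooth points arises precisely when removing some smooth point $x_j$ simultaneously turns both neighbours $x_{j-1}$ and $x_{j+1}$ into singular points. Reasoning as in the remark following Lemma~\ref{lem:singular_pts_persist_in_boundary}, applied to both neighbours, and invoking the equivalences in Lemma~\ref{lem:overlap_of_4cuts}, this forces $\langle x_{j-2}, x_{j-1}, x_j, x_{j+1} \rangle$ and $\langle x_{j-1}, x_j, x_{j+1}, x_{j+2} \rangle$ to be $4$-cuts. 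But then the consecutive $5$-chain $\langle x_{j-2}, x_{j-1}, x_j, x_{j+1}, x_{j+2} \rangle$ is a subchain of $\gamma$, so its length is at most $\ell(\gamma) = \ell < n_X$, contradicting the defining property of $n_X$.

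With this refinement in hand, $E^3$-degeneracy follows by the same argument as Theorem~\ref{thm:general_triviality_of_differential}(b), with the integer $4$ systematically replaced by $3$. An element of $E^3_{p,q}$ is represented by $\gamma \in C^\ell_{p+q}$ whose boundary $\partial \gamma$ is a linear combination of $(p+q-1)$-chains each containing at most $p-3$ smooth points, and $d^3[\gamma]$ is then represented by $\partial \gamma$ in the target $E^3_{p-3, q+2}$. By the refined lemma, however, $\partial \gamma$ is simultaneously a linear combination of chains containing at least $p-2$ smooth points; these two constraints on the smoothness counts of the summands are incompatible unless $\partial \gamma = 0$, so $d^3[\gamma] = 0$. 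Combined with the vanishing $d^r = 0$ for $r \ge 4$ supplied by Theorem~\ref{thm:general_triviality_of_differential}(b), this yields $E^3_{p,q} = E^\infty_{p,q}$.

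The only substantive step is the refined boundary lemma, and the expected main obstacle there is a purely geometric verification: excluding the simultaneous transformation of two neighbours from smooth to singular, which via Lemma~\ref{lem:overlap_of_4cuts} translates into the configuration of overlapping $4$-cuts and is ruled out by the length bound $\ell < n_X$. The spectral-sequence portion is then completely formal and identical in shape to the proof of Theorem~\ref{thm:general_triviality_of_differential}(b).
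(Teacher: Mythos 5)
Your proposal is essentially the paper's own proof: the paper likewise combines the hypothesis $\ell < n_X$ with Lemma \ref{lem:overlap_of_4cuts} to rule out the drop by three smooth points in Lemma \ref{lem:estimate_of_smooth_points_in_boundary} (both neighbours turning singular would force the overlapping $4$-cut configuration, whose consecutive $5$-point subchain has length at most $\ell < n_X$), so that the boundary of a chain with $p$ smooth points only involves chains with $p-1$ or $p-2$ smooth points. The concluding step is also the same: from this refined estimate the paper deduces the vanishing of $d^3$ by the same formal spectral-sequence argument used for Theorem \ref{thm:general_triviality_of_differential}(b), with $E^3_{p,q} = E^\infty_{p,q}$ then following from the triviality of $d^r$ for $r \ge 4$ established there.
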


\begin{proof}
Let $\gamma \in P^\ell_n$ be a proper $n$-chain which contains $p$ smooth points. Then, by the assumption $\ell < n_X$ and Lemma \ref{lem:overlap_of_4cuts}, the boundary $\partial \gamma$ is a linear combination of proper $(n-1)$-chains which contain at least $p-2$ smooth points. In other words, proper $(n-1)$-chains with $p-3$ smooth points or less never appear in the linear combination $\partial \gamma$. This means that $d^3 : E^3_{p, q} \to E^3_{p-3, q+2}$ is trivial, in view of the machinery of the spectral sequence.
\end{proof}


\subsection{Menger convexity and vanishing of $E^2_{0, 1}$}

A metric space $(X, d)$ is said to be \textit{Menger convex} if any distinct points $x, z \in X$ admit a point $y \in X$ such that $x < y < z$. 

\begin{prop}
Let $(X, d)$ be a Menger convex metric space, and $\ell > 0$ a real number.
\begin{itemize}
\item[(a)]
For any $1$-chain $\langle \varphi_0, \varphi_1 \rangle \in \widehat{P}_1$, we have $H^\ell_1(\langle \varphi_0, \varphi_1 \rangle) = 0$.

\item[(b)]
$E^2_{0, 1} = 0$.

\end{itemize}
\end{prop}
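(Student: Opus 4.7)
Both parts should follow quickly from a single observation: whenever $\varphi_0 \neq \varphi_1$ and $d(\varphi_0, \varphi_1) = \ell$, Menger convexity provides $y \in X$ with $\varphi_0 < y < \varphi_1$; since automatically $y \notin \{\varphi_0, \varphi_1\}$, the chain $\langle \varphi_0, y, \varphi_1 \rangle$ is a proper $2$-chain of length $\ell$ in which $y$ is smooth, and so
$$
\partial \langle \varphi_0, y, \varphi_1 \rangle = -\langle \varphi_0, \varphi_1 \rangle.
$$
The plan for both parts is simply to interpret this identity in the correct chain complex.

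For part (a), I would note that $C^\ell_0(\varphi_0, \varphi_1) = 0$ since $\ell > 0$, so $H^\ell_1(\langle \varphi_0, \varphi_1 \rangle)$ is the cokernel of $\partial : C^\ell_2(\varphi_0, \varphi_1) \to C^\ell_1(\varphi_0, \varphi_1)$. The target is zero whenever $\varphi_0 = \varphi_1$ or $d(\varphi_0, \varphi_1) \neq \ell$, in which case there is nothing to check; otherwise it is free of rank one on the generator $\langle \varphi_0, \varphi_1 \rangle$, and the Menger $2$-chain above lies in $C^\ell_2(\varphi_0, \varphi_1)$ and exhibits that generator as a boundary.

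For part (b), I would apply Theorem \ref{thm:direct_sum_decomposition_of_E1} to decompose
$$
E^2_{0,1} = \bigoplus_{\varphi \in \widehat{P}_1} E^2_{0,1}(\varphi),
$$
where each summand $E^2_{0,1}(\varphi)$ is the cokernel of $d^1 : C^\ell_2(\varphi) \to C^\ell_1(\varphi)$; indeed the potential target $C^\ell_0(\varphi)$ vanishes for $\varphi \in \widehat{P}_1$ by the dimension condition recorded in the paper. The only $\varphi = \langle \varphi_0, \varphi_1 \rangle$ with nonzero $C^\ell_1(\varphi)$ are the proper $1$-chains of length $\ell$, and for such $\varphi$ the Menger midpoint $y$ produces $\langle \varphi_0, y, \varphi_1 \rangle \in C^\ell_2(\varphi)$ (the frame is indeed $\langle \varphi_0, \varphi_1 \rangle$ because $y$ is smooth) whose $d^1$-image is $-\langle \varphi_0, \varphi_1 \rangle$. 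There is no genuine obstacle here; the only fiddly point is to verify that $d^1$ and $\partial$ agree on this chain, which is automatic since $\partial \langle \varphi_0, y, \varphi_1 \rangle$ drops the smooth-point count by exactly one.
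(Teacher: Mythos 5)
Your proposal is correct and follows essentially the same route as the paper: use Menger convexity to produce a point $y$ with $\varphi_0 < y < \varphi_1$, observe that $d^1\langle \varphi_0, y, \varphi_1 \rangle = -\langle \varphi_0, \varphi_1 \rangle$ in the frame complex (after disposing of the degenerate cases $\varphi_0 = \varphi_1$ and $\ell \neq d(\varphi_0,\varphi_1)$), and then deduce (b) from the direct sum decomposition of Theorem \ref{thm:direct_sum_decomposition_of_E1}. Your extra remark that $d^1$ agrees with $\partial$ on the $2$-chain because the smooth-point count drops by exactly one is the correct justification and is implicit in the paper's argument.
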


\begin{proof}
For (a), if $\varphi_0 = \varphi_1$, then $C^\ell_1(\langle \varphi_0, \varphi_1 \rangle) = 0$. Therefore we assume $\varphi_0 \neq \varphi_1$ and $\langle \varphi_0, \varphi_1 \rangle \in P_1 \subset \widehat{P}_1$. Then we have 
\begin{align*}
C^\ell_0(\langle \varphi_0, \varphi_1 \rangle) &= 0, &
C^\ell_1(\langle \varphi_0, \varphi_1 \rangle) &= 
\left\{
\begin{array}{ll}
\Z \langle \varphi_0, \varphi_1 \rangle, & (\ell = d(\varphi_0, \varphi_1)) \\
0, & (\ell \neq d(\varphi_0, \varphi_1))
\end{array}
\right.
\end{align*}
In the case that $\ell \neq d(\varphi_0, \varphi_1)$, there is nothing to prove. Therefore we assume $\ell = d(\varphi_0, \varphi_1)$. Then the basis $\langle \varphi_0, \varphi_1 \rangle$ of $C^\ell_1(\langle \varphi_0, \varphi_1 \rangle)$ is a cycle with respect to $d^1$. By the Menger convexity, there exists $c \in X$ such that $\varphi_0 < c < \varphi_1$. Thus $\langle \varphi_0, c, \varphi_1 \rangle \in C^\ell_2(\langle \varphi_0, \varphi_1 \rangle)$. We have
$$
-d^1 \langle \varphi_0, c, \varphi_1 \rangle 
= \langle \varphi_0, \varphi_1 \rangle,
$$
so that $H^\ell_1(\langle \varphi_0, \varphi_1 \rangle) = 0$. For (b), we note that Theorem \ref{thm:direct_sum_decomposition_of_E1} leads to the direct sum decomposition 
$$
E^2_{0, 1} = \bigoplus_{\varphi \in \widehat{P}_1}
H^\ell_1(\varphi),
$$
where $H^\ell_1(\varphi)$ is the homology of $(C^\ell_*(\varphi), d^1)$. Hence $E^2_{0, 1} = 0$ by (a).
\end{proof}

As a consequence of the proposition, we can reprove the following fact \cite{L-S}:

\begin{cor}
For a Menger convex space $(X, d)$ and a real number $\ell > 0$, we have $H^\ell_1(X) = 0$.
\end{cor}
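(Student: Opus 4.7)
The plan is to read the corollary off the spectral sequence, using part (b) of the preceding proposition together with a small convergence check at the bigrading $(0,1)$.

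First I would observe that for $n=1$ the smoothness filtration of $H^\ell_1$ collapses to a single step. Indeed, any proper $1$-chain $\langle x_0,x_1\rangle$ has only its two endpoints, both of which are singular by definition, so $\sigma(\gamma)=0$ for every $\gamma\in P^\ell_1$. Consequently $C^\ell_1=F_0C^\ell_1$, and the filtration on the homology reduces to
$$
0 = F_{-1}H^\ell_1 \subset F_0H^\ell_1 = H^\ell_1.
$$
From the short exact sequences listing the subquotients $F_pH^\ell_n/F_{p-1}H^\ell_n\cong E^\infty_{p,n-p}$, we therefore obtain the identification $H^\ell_1(X)=E^\infty_{0,1}$.

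Next I would check that $E^\infty_{0,1}=E^2_{0,1}$ by ruling out every differential touching the spot $(0,1)$ for $r\ge 2$. The outgoing differential $d^r:E^r_{0,1}\to E^r_{-r,r}$ lands in a group with negative $p$-index, hence is trivially zero. The incoming differential $d^r:E^r_{r,2-r}\to E^r_{0,1}$ needs a brief case analysis: for $r=2$ the source is $E^2_{2,0}$, which is zero because Theorem \ref{thm:direct_sum_decomposition_of_E1}(b) gives $E^1_{p,0}=0$ for all $p$; for $r\ge 3$ the source has $q=2-r<0$ and is zero for that reason. This yields $E^\infty_{0,1}=E^2_{0,1}$.

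Finally, I would invoke part (b) of the proposition to conclude $E^2_{0,1}=0$, hence $H^\ell_1(X)=0$. There is no substantial obstacle: the Menger convexity has already done its work in the proposition, and the only thing to verify here is the convergence bookkeeping for $(0,1)$, which is immediate once one notes that $E^1_{p,0}=0$ kills the sole a priori nontrivial incoming differential.
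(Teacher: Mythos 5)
Your argument is correct and follows essentially the same route as the paper: both read $H^\ell_1(X)$ off the smoothness spectral sequence, using $E^1_{p,0}=0$ (for $\ell>0$) to kill the $q=0$ row and part (b) of the proposition to kill $E^2_{0,1}$. Your explicit bookkeeping at the spot $(0,1)$ — identifying $H^\ell_1=E^\infty_{0,1}$ via the collapsed filtration and checking that no differential for $r\ge 2$ touches it — is just a more detailed version of the paper's observation that all $E^\infty_{p,q}$ with $p+q=1$ vanish.
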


\begin{proof}
In general, $E^r_{p, 0} = 0$ for all $p, r$, provided that $\ell > 0$. Under the assumption that $(X, d)$ is Menger convex, the $E^2$-term reads:
$$
\begin{array}{c|c|c|c|}
\hline
q = 1 & 0 & & \\
\hline
q = 0 & 0 & 0 & 0 \\
\hline
E^2_{p, q} & p = 0 & p = 1 & p = 2 
\end{array}
$$
Therefore $E^\infty_{p, q} = 0$ for any $p, q$ such that $p + q = 1$, and also $H^\ell_1(X) = 0$.
\end{proof}

With an assumption additional to the Menger convexity, we have another vanishing of $E^2$-terms.

\begin{prop} \label{prop:Menger_convexity_plus_alpha}
Let $(X, d)$ be a Menger convex metric space, and $\ell > 0$ a real number. Suppose that for any distinct points $x, z \in X$ and any real number $\epsilon > 0$, there exists a point $y \in X$ such that $x < y < z$ and $d(x, y) < \epsilon$. Then the following holds true for any integer $q \ge 2$.
\begin{itemize}
\item[(a)]
For any $\varphi = \langle \varphi_0, \cdots, \varphi_q \rangle \in \widehat{P}_q$, we have $H^\ell_q(\varphi) = 0$.

\item[(b)]
$E^2_{0, q} = 0$.
\end{itemize} 
\end{prop}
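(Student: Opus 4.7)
The plan is to prove (a) directly; part (b) is then immediate from the decomposition
$$
E^2_{0, q} = \bigoplus_{\varphi \in \widehat{P}_q} H^\ell_q(\varphi)
$$
provided by Theorem \ref{thm:direct_sum_decomposition_of_E1}(b). For (a) one may restrict to the case that $\varphi$ itself is a proper $q$-chain with $\ell(\varphi) = \ell$, since for any other $\varphi$ the complex $C^\ell_*(\varphi)$ vanishes. Under this reduction a proper $q$-chain with frame $\varphi$ must equal $\varphi$ (it has $q+1$ points and zero smooth points), so $C^\ell_q(\varphi) = \Z\varphi$ and $C^\ell_{q-1}(\varphi) = 0$. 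Proving (a) thus reduces to exhibiting some $\gamma' \in C^\ell_{q+1}(\varphi)$ with $d^1 \gamma' = \pm \varphi$.

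The construction mimics the $q=1$ case of the preceding proposition. Using the hypothesis applied to the distinct pair $\varphi_0 \neq \varphi_1$, I would choose $y \in X$ with $\varphi_0 < y < \varphi_1$ and $d(\varphi_0, y) < \epsilon$ for a sufficiently small $\epsilon > 0$, and set
$$
\gamma' = \langle \varphi_0, y, \varphi_1, \varphi_2, \ldots, \varphi_q \rangle.
$$
From $\varphi_0 < y < \varphi_1$ the chain $\gamma'$ is proper, has length $\ell$, and $y$ is smooth at position $1$. Each $\varphi_j$ with $j \ge 2$ has the same neighbours in $\gamma'$ as it did in $\varphi$, and therefore remains singular. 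Granting also that $\varphi_1$ stays singular in $\gamma'$, the formula for $d^1$ collapses to a single nontrivial term, giving $d^1 \gamma' = -\varphi$ as required.

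The main obstacle, and the place where the strengthened hypothesis is essential, is the verification that $\varphi_1$ remains singular in $\gamma'$. Since $\varphi$ is proper and $q \ge 2$, the point $\varphi_1$ is an interior singular point of $\varphi$, so
$$
\delta := d(\varphi_0, \varphi_1) + d(\varphi_1, \varphi_2) - d(\varphi_0, \varphi_2) > 0.
$$
Combining $d(y, \varphi_1) = d(\varphi_0, \varphi_1) - d(\varphi_0, y)$ with the triangle inequality $d(y, \varphi_2) \le d(\varphi_0, y) + d(\varphi_0, \varphi_2)$ yields
$$
d(y, \varphi_1) + d(\varphi_1, \varphi_2) - d(y, \varphi_2) \ge \delta - 2\,d(\varphi_0, y),
$$
which is strictly positive once $\epsilon < \delta/2$. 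This is precisely the step where the ability to make $d(\varphi_0, y)$ arbitrarily small, rather than merely finding some intermediate point, is used; plain Menger convexity would not suffice, since the chosen $y$ could otherwise lie on a geodesic from $\varphi_0$ through $\varphi_1$ to $\varphi_2$ and render $\varphi_1$ smooth in $\gamma'$.
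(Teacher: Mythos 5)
Your proposal is correct and follows essentially the same route as the paper: reduce to a proper $\varphi$ with $\ell = \ell(\varphi)$ and $C^\ell_q(\varphi) = \Z\varphi$, then use the strengthened hypothesis to insert a point $y$ with $\varphi_0 < y < \varphi_1$ and $d(\varphi_0, y)$ below half of $\delta = d(\varphi_0,\varphi_1) + d(\varphi_1,\varphi_2) - d(\varphi_0,\varphi_2)$ so that $\varphi_1$ stays singular, giving $d^1\langle \varphi_0, y, \varphi_1, \ldots, \varphi_q\rangle = -\varphi$; the paper phrases the quantitative step via the $2$-Lipschitz estimate for $F(x) = d(x,\varphi_1)+d(\varphi_1,\varphi_2)-d(x,\varphi_2)$, which is the same bound you derive directly. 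The only quibbles are cosmetic: the degenerate cases are excluded because only the degree-$q$ piece $C^\ell_q(\varphi)$ (not the whole complex) need vanish there, and $\varphi_1$ being singular in $\varphi$ comes from the frame condition $\Fr(\varphi)=\varphi$ rather than from properness alone.
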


\begin{proof}
(a) 
If $\varphi \not\in P_q$, then $C^\ell_q(\varphi) = 0$. Hence we assume that $\varphi \in P_q \subset \widehat{P}_q$. In this case, we have
\begin{align*}
C^\ell_{q-1}(\varphi) &= 0, &
C^\ell_q(\varphi) 
&= 
\left\{
\begin{array}{ll}
\Z \varphi, & (\ell = \ell(\varphi)) \\
0. & (\ell \neq \ell(\varphi))
\end{array}
\right.
\end{align*}
As a result, it suffices to show that the cycle $\varphi \in C^\ell_q(\varphi)$ is a boundary of a $(q + 1)$-chain in $C^\ell_{q+1}(\varphi)$ with respect to $d^1$, provided that $\ell = \ell(\varphi)$. Under the assumption in the proposition, we claim that there exists $c \in X$ such that $\varphi_0 < c < \varphi_1$ and $c \not< \varphi_1 \not< \varphi_2$. To prove this claim, let us define a function $F : X \to \R$ by
$$
F(x) = d(x, \varphi_1) + d(\varphi_1, \varphi_2) - d(x, \varphi_2).
$$
We have $F(\varphi_0) > 0$, since $\varphi_0 \not< \varphi_1 \not< \varphi_2$. It is easy to get the inequality
$$
\lvert F(x) - F(x') \rvert
\le \lvert d(x, \varphi_1) - d(x', \varphi_1) \rvert
+ \lvert d(x, \varphi_2) - d(x', \varphi_2) \rvert
\le 2d(x, x')
$$
for any $x, x' \in X$. By the assumption in the proposition, there exists $c \in X$ such that $\varphi_0 < c < \varphi_1$ and $2 d(\varphi_0, c) < F(\varphi_0)$. Then we have $\lvert F(\varphi_0) - F(c) \rvert < F(\varphi_0)$. If $F(\varphi_0) < F(c)$, then we apparently have $0 < F(\varphi_0) < F(c)$. If $F(\varphi_0) > F(c)$, then $F(\varphi_0) > \lvert F(\varphi_0) - F(c) \vert = F(\varphi_0) - F(c)$ and hence $F(c) > 0$. This means the existence of $c \in X$ such that $\varphi_0 < c < \varphi_1$ and $c \not< \varphi_1 \not< \varphi_2$. Now, we have $\langle \varphi_0, c, \varphi_1, \cdots, \varphi_q \rangle \in C^\ell_{q+1}(\langle \varphi_0, \cdots, \varphi_q \rangle)$, whose boundary is
$$
d\langle \varphi_0, c, \varphi_1, \cdots, \varphi_q \rangle 
= - \langle \varphi_0, \varphi_1, \cdots, \varphi_q \rangle. 
$$
Therefore (a) is established.

(b) Theorem \ref{thm:direct_sum_decomposition_of_E1} gives us the direct sum decomposition
$$
E^2_{0, q} = \bigoplus_{\varphi \in \widehat{P}_q}
H^\ell_q(\varphi).
$$
Hence (b) follows from (a).
\end{proof}

\begin{cor} \label{cor:Menger_convexity_plus_alpha}
Under the assumption in Proposition \ref{prop:Menger_convexity_plus_alpha}, we have
$$
H^\ell_2(X) \cong E^2_{1, 1}
\cong \bigoplus_{\langle \varphi_0, \varphi_1 \rangle \in P_1} 
H^\ell_2(\langle \varphi_0, \varphi_1 \rangle).
$$
\end{cor}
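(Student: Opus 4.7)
The plan is to assemble the isomorphism from three independent inputs: the $E^1$-decomposition from Theorem \ref{thm:direct_sum_decomposition_of_E1}, the vanishing $E^2_{0, 2} = 0$ guaranteed by Proposition \ref{prop:Menger_convexity_plus_alpha}, and the general bookkeeping of the smoothness spectral sequence from Theorem \ref{thm:general_triviality_of_differential}. The proof is essentially a three-step reduction, each step applying one of these ingredients.

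First, I would identify $E^2_{1, 1}$ with the asserted direct sum. Theorem \ref{thm:direct_sum_decomposition_of_E1} presents $(E^1_{*, 1}, d^1)$ as a direct sum of subcomplexes indexed by $\varphi \in \widehat{P}_1$, and taking homology in total degree $p + q = 2$ yields $E^2_{1, 1} = \bigoplus_{\varphi \in \widehat{P}_1} H^\ell_2(\varphi)$. To rewrite the index set as $P_1$, I would verify that $C^\ell_*(\varphi) = 0$ for an improper $\varphi = \langle \varphi_0, \varphi_0 \rangle$: a hypothetical $\gamma \in P^\ell_n(\varphi)$ with $n \ge 2$ would have equal endpoints and no interior singular points, but $d(\varphi_0, y) + d(y, \varphi_0) > 0 = d(\varphi_0, \varphi_0)$ for every $y \ne \varphi_0$ forbids any interior point from being smooth, while $P^\ell_1(\varphi) = \emptyset$ is immediate from properness.

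Second, I would show $E^\infty_{1, 1} = E^2_{1, 1}$ by checking that all higher differentials touching $(1, 1)$ vanish. The outgoing $d^r : E^r_{1, 1} \to E^r_{1 - r, r}$ lands in a bidegree with negative $p$ for every $r \ge 2$. The incoming $d^r : E^r_{1 + r, 2 - r} \to E^r_{1, 1}$ starts at $E^r_{3, 0}$ when $r = 2$, which is trivial since $E^1_{p, 0} = 0$ by Theorem \ref{thm:general_triviality_of_differential} (and every subsequent page is a subquotient), while for $r \ge 3$ the source has $q = 2 - r < 0$. Combined with $E^4 = E^\infty$ from Theorem \ref{thm:general_triviality_of_differential}(b), this pins down $E^\infty_{1, 1} = E^2_{1, 1}$.

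Finally, I would unwind the filtration on $H^\ell_2$. Since $n - 1 = 1$, the filtration reads $0 = F_{-1} H^\ell_2 \subset F_0 H^\ell_2 \subset F_1 H^\ell_2 = H^\ell_2$, and the only nontrivial extension is
$$
0 \to F_0 H^\ell_2 \to H^\ell_2 \to E^\infty_{1, 1} \to 0,
$$
with $F_0 H^\ell_2 = E^\infty_{0, 2}$. The latter is a subquotient of $E^2_{0, 2}$, which is zero by Proposition \ref{prop:Menger_convexity_plus_alpha}(b). Hence $H^\ell_2(X) \cong E^\infty_{1, 1} = E^2_{1, 1}$, yielding the full chain of isomorphisms. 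The argument is largely bookkeeping; the only mild subtlety worth flagging is the exclusion of improper frames in the first step, and there is no serious obstacle beyond it.
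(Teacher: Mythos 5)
Your argument is correct and follows essentially the same route as the paper: restrict the Theorem \ref{thm:direct_sum_decomposition_of_E1} decomposition of $E^2_{1,1}$ to proper frames, use $E^2_{0,2}=0$ from Proposition \ref{prop:Menger_convexity_plus_alpha} together with the vanishing $q=0$ row to collapse the filtration of $H^\ell_2$ onto $E^\infty_{1,1}=E^2_{1,1}$. Two small corrections are worth making. First, your blanket claim that $C^\ell_*(\langle \varphi_0,\varphi_0\rangle)=0$ in all degrees is false, and the inequality $d(\varphi_0,y)+d(y,\varphi_0)>0$ only rules out smoothness of an interior point whose two neighbours are both $\varphi_0$, i.e.\ only the case $n=2$; for $n\ge 3$ such chains can exist (on a circle of circumference $4$ the chain $\langle 0,1,2,3,0\rangle$ has all interior points smooth and frame $\langle 0,0\rangle$). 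Fortunately the corollary only needs $H^\ell_2(\varphi)=0$ for improper $\varphi$, which follows from $C^\ell_2(\varphi)=0$ --- exactly the statement your inequality proves and the one the paper invokes --- so the overclaim does not damage the proof. Second, the fact $E^1_{p,0}=0$ (for $\ell>0$) comes from Theorem \ref{thm:direct_sum_decomposition_of_E1}(b), not from Theorem \ref{thm:general_triviality_of_differential}.
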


\begin{proof}
The $E^2$-term of the spectral sequence is summarized as follows:
$$
\begin{array}{c|c|c|c|c|}
q = 2 & 0 & & & \\
\hline
q = 1 & 0 & E^2_{1, 1} & & \\
\hline
q = 0 & 0 & 0 & 0 & 0 \\
\hline
E^2_{p, q} & p = 0 & p = 1 & p = 2 & p = 3
\end{array}
$$
Then we get $E^\infty_{p, q} = E^2_{p, q}$ for any $p, q$ such that $p + q = 2$, which immediately leads to $H^\ell_2(X) = E^\infty_{1, 1} = E^2_{1, 1}$. To the direct sum decomposition of $E^2_{1, 1}$, only $H^\ell_2(\varphi)$ with $\varphi = \langle \varphi_0, \varphi_1 \rangle \in P_1 \subset \widehat{P}_1$ contributes, because $C^\ell_2(\varphi) = 0$ if $\varphi_0 = \varphi_1$.
\end{proof}


\subsection{Example}
\label{subsec:example}

Let $X$ be the metric space associated to the cycle graph $C_4$ with four vertices. Thus, $X = \{ 1, 2, 3, 4 \}$ consists of four points, and the distance is given by the shortest path length.
$$
\xygraph{
!{<0cm,0cm>;<1cm,0cm>:<0cm,1cm>::}
!{(0,0)}*+{\bullet}="a" ([]!{+(-0.3,0)} {1})
!{(2,0)}*+{\bullet}="b" ([]!{+(0.3,0)} {4})
!{(0,2)}*+{\bullet}="c" ([]!{+(-0.3,0)} {2})
!{(2,2)}*+{\bullet}="d" ([]!{+(0.3,0)} {3})
"a"-"b"
"a"-"c"
"b"-"d"
"c"-"d"
}
$$
The adjacency matrix $A$ and the distance matrix $D = (d(i, j))$ are
\begin{align*}
A &=
\left(
\begin{array}{cccc}
0 & 1 & 0 & 1 \\
1 & 0 & 1 & 0 \\
0 & 1 & 0 & 1 \\
1 & 0 & 1 & 0
\end{array}
\right), &
D &=
\left(
\begin{array}{cccc}
0 & 1 & 2 & 1 \\
1 & 0 & 1 & 2 \\
2 & 1 & 0 & 1 \\
1 & 2 & 1 & 0 
\end{array}
\right).
\end{align*}
The zeta matrix $Z = (e^{-d(i, j)})$ and its inverse $M = Z^{-1}$ are
\begin{align*}
Z &=
\left(
\begin{array}{cccc}
1 & q & q^2 & q \\
q & 1 & q & q^2 \\
q^2 & q & 1 & q \\
q & q^2 & q & 1
\end{array}
\right), &
M &=
\frac{1}{(1 - q^2)^2}
\left(
\begin{array}{rrrr}
1 & -q & q^2 & -q \\
-q & 1 & -q & q^2 \\
q^2 & -q & 1 & -q \\
-q & q^2 & -q & 1
\end{array}
\right),
\end{align*}
where $q = 1/e$. The magnitude of $(X, d)$ is
$$
\mathrm{Mag} =
\frac{4}{(1 + q)^2}
= 4(1 - 2q + 3q^2 - 4q^3 + \cdots).
$$

We consider the subcomplex $C_*^3(1, 4) \subset C_*^3$ of the magnitude complex whose characteristic length is $\ell = 3$. From the $(1, 4)$-entry $M(1, 4)$ of $M$,
$$
M(1, 4)
= \sum_{\ell} q^\ell \sum_n (-1)^n\mathrm{rank}H^\ell_n(1, 4)
= \frac{-q}{(1-q^2)^2}
= -q - 2q^3 - 3 q^5 - 4q^7 - \cdots,
$$
the Euler characteristic number of $H^3_*(1, 4)$ is computed as $-2$. We have
\begin{align*}
C^3_0(1, 4) &= 0, \\
C^3_1(1, 4) &= 0, \\
C^3_2(1, 4) &= 
\Z \langle 1, 2, 4 \rangle \oplus 
\Z \langle 1, 3, 4 \rangle, \\
C^3_3(1, 4) &= \Z \langle 1, 4, 1, 4 \rangle \oplus
\Z \langle 1, 2, \overset{\circ}{1}, 4 \rangle \oplus
\Z \langle 1, \overset{\circ}{4}, 3, 4 \rangle \oplus
\Z \langle 1, \overset{\circ}{2}, \overset{\circ}{3}, 4 \rangle, \\
C^3_n(1, 4) &= 0, \quad (n \ge 4)
\end{align*}
where a circle $\circ$ above a point $x$ in a chain means that the point $x$ is smooth in the chain. The $E^1$-term is summarized as follows:
$$
\begin{array}{c|c|c|c|c|}
\hline
q = 3 & \Z & 0 & 0  & 0 \\
\hline
q = 2 & \Z^2 & \Z^2 & 0 & 0 \\
\hline
q = 1 & 0 & 0 & \Z & 0 \\
\hline
q = 0 & 0 & 0 & 0 & 0 \\
\hline
E^1_{p, q} & p = 0 & p = 1 & p = 2 & p = 3 
\end{array}
$$
The $E^2$-term is:
$$
\begin{array}{c|c|c|c|c|}
\hline
q = 3 & \Z & 0 & 0 & 0 \\
\hline
q = 2 & 0 & 0 & 0 & 0 \\
\hline
q = 1 & 0 & 0 & \Z & 0 \\
\hline
q = 0 & 0 & 0 & 0 & 0 \\
\hline
E^2_{p, q} & p = 0 & p = 1 & p = 2 & p = 3 
\end{array}
$$
Therefore $E^\infty_{p, q} = E^2_{p, q}$, although $\ell = m_X = 3$. The extension problem is trivially solved, and we get:
$$
\begin{array}{|c|c|c|c|c|c|}
\hline
& n = 0 & n = 1 & n = 2 & n = 3 & n \ge 4 \\
\hline
H^3_n(1, 4) & 0 & 0 & 0 & \Z^2 & 0 \\
\hline
\end{array}
$$
This is consistent with the computation of the Euler characteristic number of $H^3_*(1, 4)$ by using $M = Z^{-1}$. Notice that $H^3_3(1, 4) = Z^3_3(1, 4)$, and a choice of its basis is
\begin{align*}
&\langle 1, 4, 1, 4 \rangle, &
&\langle 1, 2, 1, 4 \rangle - \langle 1, 2, 3, 4 \rangle + \langle 1, 4, 3, 4 \rangle.
\end{align*}

\medskip

The direct sum decomposition in Theorem \ref{thm:direct_sum_decomposition_of_E1} is
\begin{align*}
E^1_{* + 3, 3} &= C^3_*(\langle 1, 4, 1, 4 \rangle), \\
E^1_{* + 2, 2} &= C^3_*(\langle 1, 2, 4 \rangle) 
\oplus C^3_*(\langle 1, 3, 4 \rangle), \\
E^1_{* + 1, 1} &= C^3_*(\langle 1, 4 \rangle).
\end{align*}
The complexes $C^3_*(\langle 1, 4, 1, 4 \rangle)$ and $C^3_*(\langle 1, 4 \rangle)$ have the trivial differential:
\begin{align*}
C^3_*(\langle 1, 4, 1, 4 \rangle)
&=
\left\{
\begin{array}{ll}
\Z \langle 1, 4, 1, 4 \rangle, & (* = 3) \\
0, & (* \neq 3)
\end{array}
\right.
\\
C^3_*(\langle 1, 4 \rangle)
&=
\left\{
\begin{array}{ll}
\Z \langle 1, 2, 3, 4 \rangle, & (* = 3) \\
0. & (* \neq 3)
\end{array}
\right.
\end{align*}
The chain complexes $C^3_*(\langle 1, 2, 4 \rangle)$ and $C^3_*(\langle 1, 3, 4 \rangle)$ are non-trivial, but their homology are trivial:
\begin{align*}
C^3_*(\langle 1, 2, 4 \rangle)
&=
\left\{
\begin{array}{ll}
\Z \langle 1, 2, 4 \rangle, & (* = 2) \\
\Z \langle 1, 2, 1, 4 \rangle, & (* = 3) \\
0, & (* \neq 2, 3)
\end{array}
\right.
\\
C^3_*(\langle 1, 3, 4 \rangle)
&=
\left\{
\begin{array}{ll}
\Z \langle 1, 3, 4 \rangle, & (* = 2) \\
\Z \langle 1, 2, 3, 4 \rangle, & (* = 3) \\
0. & (* \neq 2, 3)
\end{array}
\right.
\end{align*}

To see an example of the injective chain map $\iota$ in Theorem \ref{thm:tensor_product_decomposition}, we consider $C^1_*(\langle 1, 2 \rangle)$ and $C^2_*(\langle 2, 4 \rangle)$. 
\begin{align*}
C^1_*(\langle 1, 2 \rangle)
&=
\left\{
\begin{array}{ll}
\Z \langle 1, 2 \rangle, & (* = 1) \\
0. & (* \neq 1)
\end{array}
\right.
\\
C^2_*(\langle 2, 4 \rangle)
&=
\left\{
\begin{array}{ll}
\Z \langle 2, 4 \rangle, & (* = 1) \\
\Z \langle 2, 1, 4 \rangle \oplus \Z \langle 2, 3, 4 \rangle, & (* = 2) \\
0. & (* \neq 1, 2)
\end{array}
\right.
\end{align*}
The following is an example of the injective homomorphism in Theorem \ref{thm:tensor_product_decomposition} 
$$
\iota : \ C^3_*(\langle 1, 2, 4 \rangle) \to
C^1_*(\langle 1, 2 \rangle) \otimes C^2_*(\langle 2, 4 \rangle).
$$
Concretely, this is the following homomorphism
\begin{align*}
\iota(\langle 1, 2, 4 \rangle) 
&= \langle 1, 2 \rangle \otimes \langle 2, 4 \rangle, \\
\iota(\langle 1, 2, 1, 4 \rangle)
&= \langle 1, 2 \rangle \otimes \langle 2, 1, 4 \rangle.
\end{align*}
This misses $\langle 1, 2 \rangle \otimes \langle 2, 3, 4 \rangle$, and is not surjective. Further, this $\iota$ is not a quasi-isomorphism.


\section{The 3rd magnitude homology of the circle}
\label{sec:circle}

As an application of the spectral sequence, we compute the third magnitude homology of the circle, which turns out to be trivial.

\subsection{Summary of result}

Let $S^1 = \R/2\pi r \Z$ be a circle, where $r > 0$ is a positive real number. For a point $x \in S^1$, we can choose a representative $\tilde{x} \in \R$ of $x = [\tilde{x}] \in S^1$. The distance between $x_0 = [\tilde{x}_0]$ and $x_1 = [\tilde{x}_1]$ is then given by $d(x_0, x_1) = \min_{k \in \Z} \lvert x_0 - x_1 + 2\pi r k \rvert$. Accordingly, $d(x_0, x_1) = \lvert \tilde{x}_0 - \tilde{x}_1 \rvert$ if and only if $\lvert \tilde{x}_0 - \tilde{x}_1 \rvert \le \pi r$. The diameter of the circle as a metric space is
$$
\mathrm{diam}(S^1) = 
\max_{x, y \in S^1} d(x, y) = \pi r,
$$
and the distance of two points is always bounded by $\pi r$.

\begin{thm} \label{thm:thrid_homology_of_circle}
We have $H^\ell_3(S^1) = 0$ for any $\ell$.
\end{thm}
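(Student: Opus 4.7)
The plan is to apply the smoothness spectral sequence of Section~\ref{sec:filtration} to $X = S^1$ and show that every $E^\infty_{p, q}$ with $p + q = 3$ vanishes. Since $S^1$ is Menger convex and along any geodesic one can find interior points arbitrarily close to a given endpoint, Proposition~\ref{prop:Menger_convexity_plus_alpha} applies and gives $E^2_{0, q}(S^1) = 0$ for every $q \ge 1$; in particular $E^\infty_{0, 3} = 0$.

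The key preliminary step is to compute $H^\ell_n(\varphi)$ for $\varphi \in \widehat{P}_1$. If $\varphi = \langle \varphi_0, \varphi_1 \rangle$, then any proper chain with frame $\varphi$ has all interior points smooth, which on $S^1$ forces them to lie monotonically along a single geodesic arc from $\varphi_0$ to $\varphi_1$. Hence $C^\ell_*(\varphi) = 0$ unless $\ell = d(\varphi_0, \varphi_1)$, and in that case $(C^\ell_*(\varphi), d^1)$ is a direct sum of one (non-antipodal $\varphi$) or two (antipodal $\varphi$) copies of the augmented simplicial chain complex of the order complex of the interior of the arc---which is contractible, being an ``infinite simplex''. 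Therefore $H^\ell_n(\varphi) = 0$ for every $n \ge 1$. By Theorem~\ref{thm:direct_sum_decomposition_of_E1} this forces $E^2_{p, 1}(S^1) = 0$ for every $p$. In particular $E^\infty_{2, 1} = 0$, and the differential $d^2 : E^2_{3, 1} \to E^2_{1, 2}$ is the zero map, so $E^\infty_{1, 2} = E^2_{1, 2}$.

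It remains to prove $E^2_{1, 2}(S^1) = \bigoplus_{\varphi \in \widehat{P}_2} H^\ell_3(\varphi) = 0$. For each $\varphi = \langle \varphi_0, \varphi_1, \varphi_2 \rangle$, I would split $C^\ell_*(\varphi)$ along the pairs $(\ell_1, \ell_2)$ with $\ell_1 + \ell_2 = \ell$ via Corollary~\ref{cor:tensor_product_decomposition}, and use the injection $\iota$ of Theorem~\ref{thm:tensor_product_decomposition} into tensor products of the interval-type complexes analyzed above. Each such tensor product $T$ is acyclic (Künneth, since both factors are), so the long exact sequence coming from $0 \to C^\ell_*(\varphi) \xrightarrow{\iota} T \to \operatorname{coker}\iota \to 0$ identifies $H^\ell_3(\varphi) \cong H_4(\operatorname{coker}\iota)$. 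The cokernel records precisely those tensor products $\gamma_1 \otimes \gamma_2$ whose concatenation across $\varphi_1$ makes $\varphi_1$ smooth---i.e., $4$-cut configurations on $S^1$ centered at $\varphi_1$. Those frames $\varphi$ for which $\varphi_0 < \varphi_1 < \varphi_2$ already satisfy $C^\ell_*(\varphi) = 0$; the remaining case is $\varphi_0 \not< \varphi_1 \not< \varphi_2$.

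The main obstacle is the analysis of $\operatorname{coker}\iota$ for such $\varphi$ and the verification that its degree-$4$ homology vanishes. This requires a classification of the $4$-cut configurations on $S^1$ across $\varphi_1$ (controlled by the pairwise distances $d(\varphi_i, \varphi_j)$ and the splitting $(\ell_1, \ell_2)$) and the construction of an explicit chain homotopy, adapted from the Menger-convexity construction of Proposition~\ref{prop:Menger_convexity_plus_alpha} and exploiting the rotational symmetry and simple geodesic structure of the circle. Once this vanishing is established, the spectral sequence collapses to give $H^\ell_3(S^1) = 0$.
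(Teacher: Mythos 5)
Your overall strategy (kill $E^\infty_{0,3}$, $E^\infty_{2,1}$, $E^\infty_{1,2}$, with $E^1_{p,0}=0$ from Theorem \ref{thm:direct_sum_decomposition_of_E1}(b) disposing of the last corner) is the paper's, but two of the steps have genuine gaps. First, your computation of $H^\ell_n(\varphi)$ for $\varphi=\langle\varphi_0,\varphi_1\rangle\in\widehat{P}_1$ is wrong as stated. Smoothness is a condition on consecutive triples only, and on $S^1$ it does not force a chain with frame $\langle\varphi_0,\varphi_1\rangle$ to run along a shortest arc: with $L=2\pi r$ and $x_k$ the point at arc length $kL/5$, the chain $\langle x_0,x_1,x_2,x_3\rangle$ has both interior points smooth, frame $\langle x_0,x_3\rangle$, but length $3L/5>d(x_0,x_3)=2L/5$. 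So $C^\ell_*(\langle\varphi_0,\varphi_1\rangle)\neq0$ for suitable $\ell>d(\varphi_0,\varphi_1)$, and these $4$-cut chains are exactly the degree-$3$ cycles that the paper kills separately in Proposition \ref{prop:E2_21_circle} (by prepending a point $\overline{x}$ with $\varphi_0<\overline{x}<x_1$ and $\overline{x}\not<x_1\not<\varphi_1$); your order-complex identification only covers $\ell=d(\varphi_0,\varphi_1)$. Moreover, in the antipodal case $d(\varphi_0,\varphi_1)=\pi r$ the complex is not a direct sum of two contractible pieces (the two arcs share the bottom chain $\langle\varphi_0,\varphi_1\rangle$), and in fact $H^{\pi r}_2(\langle\varphi_0,\varphi_1\rangle)\cong\Z$; this must be so, since Corollary \ref{cor:Menger_convexity_plus_alpha} together with the known $H^{\pi r}_2(S^1)=\Z[S^1]$ forces $E^2_{1,1}\neq0$ at $\ell=\pi r$. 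Hence your claims that $C^\ell_*(\varphi)=0$ unless $\ell=d(\varphi_0,\varphi_1)$, that $H^\ell_n(\varphi)=0$ for all $n\ge1$, and that $E^2_{p,1}=0$ for every $p$ are all false; only the degree-$3$ vanishing is true, and the $\ell>d(\varphi_0,\varphi_1)$ part of it is missing from your argument. The same false acyclicity undercuts your K\"unneth step, since tensor factors $C^{\ell_j}_*(\langle\varphi_{j-1},\varphi_j\rangle)$ with $\ell_j>d(\varphi_{j-1},\varphi_j)$, or antipodal factors with $\ell_j=\pi r$, need not be acyclic, so the identification $H^\ell_3(\varphi)\cong H_4(\operatorname{coker}\iota)$ is not justified in general.

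Second, the heart of the theorem, $E^2_{1,2}=0$, is not proved but only planned: after setting up $0\to C^\ell_*(\varphi)\xrightarrow{\iota}T\to\operatorname{coker}\iota\to0$ you explicitly defer the analysis of $\operatorname{coker}\iota$ and the construction of the required homotopy, which is precisely the hard content. The paper's Proposition \ref{prop:E2_12_circle} does this step directly and more simply: in degree $3$ every chain with proper frame $\varphi=\langle\varphi_0,\varphi_1,\varphi_2\rangle$ is of the form $\langle\varphi_0,x,\varphi_1,\varphi_2\rangle$ or $\langle\varphi_0,\varphi_1,y,\varphi_2\rangle$, and two explicit moves using auxiliary points $\overline{x}$, $\overline{y}$ chosen on shortest arcs (so that inserting them keeps $\varphi_1$ singular) show that every cycle is a boundary. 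Until you both repair the $\ell>d(\varphi_0,\varphi_1)$ case for $1$-chain frames and actually carry out the cokernel analysis (or replace it with a direct argument of this kind), the proposal does not prove the theorem.
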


\begin{proof}
The third homology $H^\ell_3(S^1)$ receives the contributions of $E^\infty_{p, 3 - q}$ with $p = 0, 1, 2, 3$. We have $E^1_{0, 3} = 0$ by design. Applying Corollary \ref{prop:Menger_convexity_plus_alpha} to the circle $S^1$, we get $E^2_{0, 3} = 0$ for $\ell > 0$. In the case of $\ell = 0$, we trivially have $E^2_{0, 3} = 0$. By using Proposition \ref{prop:E2_21_circle} and Proposition \ref{prop:E2_12_circle} to be shown, we respectively get $E^2_{2, 1} = 0$ and $E^2_{1, 2} = 0$. Thus, we conclude $H^\ell_3(S^1) = 0$.
\end{proof}

From general properties \cite{L-S}, the magnitude homology group $H^\ell_n(S^1)$ of degree $n = 0, 1$ is readily seen for any $\ell$. In the case of degree $n = 2$, the magnitude homology is computed in \cite{K-Y} for any $\ell$, and, moreover, in the case of $\ell \le \pi r$ the magnitude homology is determined for all degree. In particular, $H^\ell_3(S^1) = 0$ for all $\ell \le \pi r$. By Theorem \ref{thm:thrid_homology_of_circle}, this vanishing turns out to hold true for all $\ell$. The following table summarizes these results, in which $\Z[S^1] = \bigoplus_{x \in S^1} \Z x$:
$$
\begin{array}{|c|c|c|c|c|c|c|}
\hline
H^\ell_n(S^1) & n = 0 & n = 1 & n = 2 & n = 3 & n = 4 & n \ge 5 \\
\hline
\ell = 0 & \Z[S^1] & 0 & 0 & 0 & 0 & 0 \\
\hline
0 < \ell < \pi r & 0 & 0 & 0 & 0 & 0 & 0 \\
\hline
\ell = \pi r & 0 & 0 & \Z[S^1] & 0 & 0 & 0 \\
\hline
\pi r < \ell < 2\pi r & 0 & 0 & 0 & 0 & ? & ? \\
\hline
\ell = 2\pi r & 0 & 0 & 0 & 0 & ? & ? \\
\hline
2\pi r < \ell & 0 & 0 & 0 & 0 & ? & ? \\
\hline
\end{array}
$$
We remark that the computation by using the spectral sequence is further developed in \cite{G}, a consequence of which determines $H^\ell_n(S^1)$ completely.

\subsection{Calculation of the spectral sequence}

Let $E^r_{p, q}$ be the smoothness spectral sequence for the magnitude homology $H^\ell_*(S^1)$. The following simple facts will be key to the calculations of $E^r_{p, q}$ here:
\begin{itemize}
\item
Let $\varphi, \varphi' \in S^1$ be distinct points. Suppose that points $x_1, \ldots, x_p$ lie on a shortest arc joining $\varphi$ to $\varphi'$ so that $d(\varphi, x_1) < d(\varphi, x_2) < \cdots < d(\varphi, x_p)$. Then $\langle \varphi, x_1, \cdots, x_p, \varphi' \rangle$ is a geodesically simple chain of length $d(\varphi, \varphi')$. 

\item
Let $x_1, \ldots, x_p, \varphi, \varphi' \in S^1$ be points such that $x_i \not < \varphi \not< \varphi'$ for $i = 1, \ldots, p$. Then there exists a point $y$ on a shortest arc joining $\varphi$ to $\varphi'$ such that $x_i \not< \varphi \not< y$ for $i = 1, \ldots, p$ and $\varphi < y < \varphi'$.

\end{itemize}

\begin{prop} \label{prop:E2_21_circle} 
$E^2_{2, 1} = 0$.
\end{prop}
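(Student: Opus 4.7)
My plan is to decompose $E^1_{*,1}$ along frames and directions, reducing the vanishing to a concrete statement about the resulting chain complexes. By Theorem \ref{thm:direct_sum_decomposition_of_E1}, $E^1_{*,1} = \bigoplus_{\varphi \in \widehat{P}_1} C^\ell_{*+1}(\varphi)$, so it suffices to prove $H_3(C^\ell_*(\varphi), d^1) = 0$ for each $\varphi = \langle \varphi_0, \varphi_1\rangle$; the case $\varphi_0 = \varphi_1$ is vacuous. Assuming $\varphi_0 \ne \varphi_1$, the first key fact stated just above the proposition forces every smooth proper chain from $\varphi_0$ to $\varphi_1$ to lie on a locally geodesic arc of $S^1$, specified by a direction $\epsilon \in \{+, -\}$ at $\varphi_0$ together with path-length positions $0 < t_1 < \cdots < t_{n-1} < \ell$. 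Smoothness at each internal point becomes the second-neighbor condition $t_{i+1} - t_{i-1} \le \pi r$, and $C^\ell_*(\varphi)$ splits as $C^\ell_*(\varphi, +) \oplus C^\ell_*(\varphi, -)$ compatibly with $d^1$, so I would argue for each direction separately.

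If $\ell \le \pi r$, the second-neighbor condition is automatic, and $C^\ell_*(\varphi, \epsilon)$ coincides (up to a degree shift) with the simplicial chain complex of the full simplex on the vertex set $(0, \ell)$, which is contractible; hence $H_3 = 0$. If $\ell > \pi r$, I would prove the stronger fact that every generator $(t_1, t_2) \in C^\ell_3(\varphi, \epsilon)$ is already a boundary. Admissibility forces $t_2 \le \pi r$ and $t_1 \ge \ell - \pi r > 0$, so I can pick an auxiliary $t_3$ in the nonempty interval $(\pi r, \min(t_1 + \pi r, \ell))$ and form the 4-chain $(t_1, t_2, t_3)$. Because $t_3 > \pi r$, removing either $t_1$ or $t_2$ would produce a chain whose new second-neighbor distance is $t_3 > \pi r$ and hence inadmissible; those removals drop out of $d^1$, and only the removal of $t_3$ survives, giving $d^1(t_1, t_2, t_3) = -(t_1, t_2)$.

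The main obstacle I anticipate is the wrap-around case $\ell > \pi r$: the chain geometrically crosses halfway around $S^1$, and the standard cone/homotopy arguments that acyclify the interval complex break down because $d^1$ discards removals that violate admissibility. The crucial move is to exploit this restriction constructively by placing the auxiliary point $t_3$ past the $\pi r$ threshold, so that the usual three-term simplicial boundary collapses to a single term, producing the desired boundary identity cleanly.
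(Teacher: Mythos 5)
Your argument is correct, and at its core it runs along the same route as the paper: decompose $E^1_{*,1}$ by frames via Theorem \ref{thm:direct_sum_decomposition_of_E1}, then kill the third homology of each summand $C^\ell_*(\langle \varphi_0,\varphi_1\rangle)$ by explicit fillings, exploiting that face removals which render a remaining point singular drop out of $d^1$. The execution differs enough to be worth comparing. You first coordinatize: every chain whose interior points are all smooth winds monotonically in a single direction, so you split further by direction and encode chains by arclength positions subject to the second-neighbor condition $t_{i+1}-t_{i-1}\le \pi r$; do state and verify this monotonicity explicitly (it is the converse of the paper's first ``key fact'' and follows from the one-line observation that two consecutive steps of opposite orientation cannot satisfy the betweenness equality), and note that $C^\ell_*(\varphi,\epsilon)$ is the full-simplex complex only when chains of that direction, length and endpoints exist at all --- otherwise it is zero, which is harmless; the vacuity of frames with $\varphi_0=\varphi_1$ also deserves its one-line proof (adding the two smoothness equalities for $\langle\varphi_0,x_1,x_2,\varphi_0\rangle$ forces $d(x_1,x_2)=0$). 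You then split cases at $\ell\le\pi r$ versus $\ell>\pi r$, whereas the paper splits at $\ell=d(\varphi_0,\varphi_1)$ versus $\ell>d(\varphi_0,\varphi_1)$; the two agree wherever the complexes are nonzero. In the geodesic case the paper writes down the cone homotopy $h$ inserting a fixed point $\overline{x}$ (with a separate choice of arc when $d(\varphi_0,\varphi_1)=\pi r$), which is exactly your ``contractible full simplex'' statement, made uniform by the direction splitting. In the wrap-around case the paper also fills each generator by a single $4$-chain, but inserts the auxiliary point next to $\varphi_0$, chosen via its second ``key fact'', while you insert it past the $\pi r$ threshold next to $\varphi_1$ at an explicit coordinate $t_3\in(\pi r,\min(t_1+\pi r,\ell))$; both choices make exactly one boundary term survive and give $d^1(\cdots)=-\gamma$. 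Your coordinate model buys mechanical admissibility checks and a uniform treatment of the diameter case; the paper's version avoids setting up the parametrization and works directly with the metric relations.
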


\begin{proof}
Recall the direct sum decomposition
$$
E^2_{p, 1} 
= \bigoplus_{\langle \varphi_0, \varphi_1 \rangle \in \widehat{P}_1} 
C^\ell_{p+1}(\langle \varphi_0, \varphi_1 \rangle).
$$
If $\ell < d(\varphi_0, \varphi_1)$, then $E^2_{p, 1} = 0$ is clear. In the following, we shall prove that $H^\ell_3(\langle \varphi_0, \varphi_1 \rangle) = 0$ in the cases that $\ell = d(\varphi_0, \varphi_1)$ and $\ell > d(\varphi_0, \varphi_1)$.

In the case that $\ell = d(\varphi_0, \varphi_1)$, if $\ell = 0$ additionally, then  $C^\ell_{p+1}(\langle \varphi_0, \varphi_1 \rangle) = 0$ for $p \ge 0$. Hence we assume $\ell > 0$, so that $\varphi_0$ and $\varphi_1$ are distinct. Notice the obvious bound $d(\varphi_0, \varphi_1) \le \pi r$. Suppose first $d(\varphi_0, \varphi_1) < \pi r$. Then there is the unique shortest arc joining $\varphi_0$ to $\varphi_1$, and all the points $x_1, \cdots, x_p$ in any chain $\langle \varphi_0, x_1, \cdots, x_p, \varphi_1 \rangle \in C^\ell_{p+1}(\langle \varphi_0, \varphi_1)$ lie on this arc. Let us choose a point $\overline{x}$ on the arc. For a chain $\gamma = \langle \varphi_0, x_1, \cdots, x_p, \varphi_1 \rangle \in C^\ell_{p+1}(\langle \varphi_0, \varphi_1 \rangle)$, it can happen that $\overline{x} = x_i$ for an $i \in \{ 1, \ldots, p \}$. Otherwise, we have $x_i < \overline{x} < x_{i+1}$ for an $i \in \{ 0, \ldots, p \}$, where $x_0 = \varphi_0$ and $x_{p+1} = \varphi_1$. In view of the possibility of these cases, we put
$$
h(\gamma)
=
\left\{
\begin{array}{ll}
0, & (\mbox{$\overline{x} = x_i$}) \\
(-1)^i \langle \varphi_0, x_1, \cdots, x_i, \overline{x}, x_{i+1}, \cdots,
x_p, \varphi_1 \rangle. &
(x_i < \overline{x} < x_{i+1})
\end{array}
\right.
$$
The points $x_1, \ldots, x_p$ are smooth in $h(\gamma)$, because they all lie on the arc of length $\le \pi r$. Therefore $h$ defines a homomorphism 
$$
h : C^\ell_{p+1}(\langle \varphi_0, \varphi_1 \rangle)
\to C^\ell_{p+2}(\langle \varphi_0, \varphi_1 \rangle).
$$
We can verify directly $hd^1\gamma + d^1h\gamma = - \gamma$. Hence $h$ provides a chain homotopy between the trivial and the identity homomorphism on the chain complex $(C^\ell_{* + 1}(\langle \varphi_0, \varphi_1 \rangle, d^1)$. This proves $H^\ell_{p+1}(\langle \varphi_0, \varphi_1 \rangle) = 0$ for any $p$. Suppose next $d(\varphi_0, \varphi_1) = \pi r$. In this case, there is two distinct shortest arcs of length $\pi r$ that joints $\varphi_0$ to $\varphi_1$. Choosing a points $\overline{x}$ on one arc and $\overline{x}'$ on the other arc, we can define a homomorphism $h : C^\ell_p(\langle \varphi_0, \varphi_1 \rangle) \to C^\ell_{p+1}(\langle \varphi_0, \varphi_1 \rangle)$ for $p \ge 1$ in the same manner as above. This $h$ again gives a chain homotopy, and leads to $H^\ell_{p+1}(\langle \varphi_0, \varphi_1 \rangle) = 0$ for any $p > 0$.

In the case that $\ell > d(\varphi_0, \varphi_1)$, let $\langle \varphi_0, x_1, x_2, \varphi_1 \rangle \in C^\ell_3(\langle \varphi_0, \varphi_1 \rangle)$ be a chain. Because $\ell > d(\varphi_0, \varphi_1)$, this chain is a $4$-cut, namely, $\varphi_0 \not< x_1 \not< \varphi_1$ and $\varphi_0 \not< x_2 \not< \varphi_1$. Consequently, $\gamma$ is a cycle: $d^1\gamma = 0$. Since $x_1$ is smooth in the chain, it holds that $\varphi_0 < x_1 < x_2$. In addition, $d(\varphi_0, x_2) \le \pi r$. As a result, the points $\varphi_0$, $x_1$ and $x_2$ lie on a unique shortest arc joining $\varphi_0$ to $x_2$. We can choose a point $\overline{x}$ on the arc such that $\varphi_0 < \overline{x} < x_1$ and $\overline{x} \not< x_1 \not< \varphi_1$. Then $\overline{x} < x_1 < x_2$ and $\overline{x} \not< x_2 \not< \varphi_1$ follow. We thus get $\langle \varphi_0, \overline{x}, x_1, x_2, \varphi_1 \rangle \in C^\ell_4(\langle \varphi_0, \varphi_1)$ whose boundary is
$$
d^1\langle \varphi_0, \overline{x}, x_1, x_2, \varphi_1 \rangle
= - \langle \varphi_0, x_1, x_2, \varphi_1 \rangle.
$$
Therefore we also conclude $H^\ell_3(\langle \varphi_0, \varphi_1 \rangle) = 0$ in this case.
\end{proof}

\begin{prop} \label{prop:E2_12_circle}
$E^2_{1, 2} = 0$.
\end{prop}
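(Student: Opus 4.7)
The plan is to combine the direct-sum decomposition of Theorem \ref{thm:direct_sum_decomposition_of_E1} with the circle-specific smoothing encoded in the second bullet point stated at the beginning of the present subsection, in the same spirit as Proposition \ref{prop:E2_21_circle}. By that decomposition,
$$
E^2_{1, 2} = \bigoplus_{\varphi \in \widehat{P}_2} H_3(C^\ell_*(\varphi), d^1),
$$
so it suffices to show that each summand vanishes. The only case in which $C^\ell_3(\varphi)$ can be non-zero is $\varphi = \langle \varphi_0, \varphi_1, \varphi_2 \rangle \in P_2$ with $\varphi_0 \not< \varphi_1 \not< \varphi_2$ and $\ell = \ell(\varphi) = d(\varphi_0, \varphi_1) + d(\varphi_1, \varphi_2)$; fix such a $\varphi$.

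First I would describe the complex concretely in the relevant degrees. One has $C^\ell_2(\varphi) = \Z \varphi$, while $C^\ell_3(\varphi)$ is freely generated by the two families $\gamma_x := \langle \varphi_0, x, \varphi_1, \varphi_2 \rangle$ for $x \in A := \{x : \varphi_0 < x < \varphi_1,\ x \not< \varphi_1 \not< \varphi_2\}$ and $\eta_y := \langle \varphi_0, \varphi_1, y, \varphi_2 \rangle$ for $y \in B := \{y : \varphi_1 < y < \varphi_2,\ \varphi_0 \not< \varphi_1 \not< y\}$. The key $4$-chain $\xi_{x, y} := \langle \varphi_0, x, \varphi_1, y, \varphi_2 \rangle$ lies in $C^\ell_4(\varphi)$ precisely when $x \in A$, $y \in B$, and $x \not< \varphi_1 \not< y$. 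A direct sign computation yields
$$
d^1 \gamma_x = -\varphi, \qquad d^1 \eta_y = \varphi, \qquad d^1 \xi_{x, y} = -\gamma_x - \eta_y,
$$
so every cycle has the form $\zeta = \sum_x a_x \gamma_x + \sum_y b_y \eta_y$ with $\sum_x a_x = \sum_y b_y =: N$.

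The core step is to select a \emph{single} pair $(\bar x, \bar y) \in A \times B$ with $\bar x \not< \varphi_1 \not< \bar y$ that is simultaneously compatible with the (finite) support of $\zeta$, in the sense that $x \not< \varphi_1 \not< \bar y$ for all $x \in \mathrm{supp}(a)$ and $\bar x \not< \varphi_1 \not< y$ for all $y \in \mathrm{supp}(b)$. This is exactly what the second bullet point provides: applied first to $(\varphi_1, \varphi_0)$ with input list $\{\varphi_2\} \cup \mathrm{supp}(b)$, it produces $\bar x \in A$ with $\bar x \not< \varphi_1 \not< y$ for all $y \in \mathrm{supp}(b)$; applied next to $(\varphi_1, \varphi_2)$ with input list $\{\varphi_0, \bar x\} \cup \mathrm{supp}(a)$, it produces $\bar y \in B$ with $\bar x \not< \varphi_1 \not< \bar y$ and $x \not< \varphi_1 \not< \bar y$ for all $x \in \mathrm{supp}(a)$. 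With such $(\bar x, \bar y)$ in hand, the three identities above assemble into
$$
\zeta = d^1\!\left(N\, \xi_{\bar x, \bar y} - \sum_x a_x \xi_{x, \bar y} - \sum_y b_y \xi_{\bar x, y}\right),
$$
so $\zeta$ is a boundary and $H_3(C^\ell_*(\varphi)) = 0$.

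The main obstacle is precisely the simultaneous-compatibility step: the strict margin $d(x, \varphi_1) + d(\varphi_1, \varphi_2) - d(x, \varphi_2) > 0$ characterising $x \in A$ can shrink arbitrarily as $x$ varies, so no argument at the level of general metric spaces produces a uniform $\bar y$ valid for all of $A$. The circle-specific bullet circumvents this by supplying $\bar y$ compatible with any prescribed finite list, which is enough because $\zeta$ has finite support. A secondary subtlety is the non-uniqueness of shortest arcs when $d(\varphi_0, \varphi_1) = \pi r$ or $d(\varphi_1, \varphi_2) = \pi r$, but the bullet is formulated without any uniqueness assumption and this case is absorbed without modification.
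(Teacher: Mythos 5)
Your proposal is correct and follows essentially the same route as the paper: the frame decomposition of $E^1_{*,2}$, the classification of chains in $P^\ell_3(\varphi)$ into the two types $\langle \varphi_0, x, \varphi_1, \varphi_2\rangle$ and $\langle \varphi_0, \varphi_1, y, \varphi_2\rangle$, and the circle-specific selection of compatible points $\bar x, \bar y$ used to fill cycles by the $4$-chains $\langle \varphi_0, x, \varphi_1, y, \varphi_2\rangle$. The only difference is cosmetic: the paper reduces $\zeta$ modulo boundaries in two steps and then invokes the cycle condition to kill the leftover multiple of $\langle \varphi_0, \varphi_1, \overline{y}, \varphi_2\rangle$, whereas you package the same cancellations into one explicit primitive using $\sum_x a_x = \sum_y b_y$.
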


\begin{proof}
Recall the direct sum decomposition
$$
E^1_{p, 2} 
= 
\bigoplus_{\langle \varphi_0, \varphi_1, \varphi_2 \rangle \in \widehat{P}_2} 
C^\ell_{p+2}(\langle \varphi_0, \varphi_1, \varphi_2 \rangle).
$$
We can then prove that $C^\ell_{3}(\langle \varphi_0, \varphi_1, \varphi_2 \rangle) = 0$ whenever $\varphi = \langle \varphi_0, \varphi_1, \varphi_2 \rangle \in \widehat{P}_2$ is an improper chain. It is obvious that $C^\ell_3(\varphi) = 0$ in the case that $\varphi_0 = \varphi_1 = \varphi_2$. Consider the case that $\varphi_0 = \varphi_1 \neq \varphi_2$. Then a possible $3$-chain in $C^\ell_3(\varphi)$ is of the form $\langle \varphi_0, x, \varphi_1, \varphi_2 \rangle$. In this chain, $x$ must be a smooth point. However, this is impossible under $\varphi_0 = \varphi_1$. Hence $C^\ell_3(\varphi) = 0$. The same argument applies to the case of $\varphi_0 \neq \varphi_1 = \varphi_2$. 

Henceforth we assume that $\varphi = \langle \varphi_0, \varphi_1, \varphi_2 \rangle \in P_2$ is a proper chain. Under this assumption, there appear two types of chains in $P^\ell_3(\varphi)$:
\begin{itemize}
\item[(1)]
$\langle \varphi_0, x, \varphi_1, \varphi_2 \rangle \in P^\ell_3(\varphi)$, where $\varphi_0 < x < \varphi_1$ and $x \not< \varphi_1 \not< \varphi_2$.

\item[(2)]
$\langle \varphi_0, \varphi_1, y, \varphi_2 \rangle \in P^\ell_3(\varphi)$, where $\varphi_0 \not< \varphi_1 \not< y$ and $\varphi_1 < y < \varphi_2$.
\end{itemize}
In any event, we have $\ell(\gamma) = \ell(\varphi)$ for $\gamma \in P^\ell_3(\varphi)$. This implies $C^\ell_3(\varphi) = 0$ whenever $\ell \neq \ell(\varphi)$. We thus additionally assume $\ell = \ell(\varphi)$ in the following. In view of the classification in types, we can express an element $\zeta \in C^\ell_3(\varphi)$ as
$$
\zeta = 
\sum_{i = 1}^{m} 
M_i
\langle \varphi_0, x_i, \varphi_1, \varphi_2 \rangle
+
\sum_{j = 1}^{n} 
N_i
\langle \varphi_0, \varphi_1, y_j, \varphi_2 \rangle,
$$
where the chains $\langle \varphi_0, x_i, \varphi_1, \varphi_2 \rangle$ and $\langle \varphi_0, \varphi_1, y_i, \varphi_2 \rangle$ are of type (1) and (2), respectively. Now, on a shortest arc joining $\varphi_0$ to $\varphi_1$, we can find a point $\overline{x}$ such that $\overline{x} \not< \varphi_1 \not< y_j$ for all $j$. This is possible, since $\varphi_0 \not< \varphi_1 \not< y_j$ for all $j$. We then have a chain $\langle \varphi_0, \overline{x}, \varphi_1, y_j, \varphi_2 \rangle \in C^\ell_4(\varphi)$ whose boundary is
$$
d^1\langle \varphi_0, \overline{x}, \varphi_1, y_j, \varphi_2 \rangle 
= - \langle \varphi_0, \varphi_1, y_j, \varphi_2 \rangle 
- \langle \varphi_0, \overline{x}, \varphi_1, \varphi_2 \rangle.
$$
Thus, up to a boundary, $\zeta$ agrees with the sum of chains of type (1) only
$$
\zeta =
\sum_{i = 1}^{m'} N'_i
\langle \varphi_0, x'_i, \varphi_1, \varphi_2 \rangle 
+ d^1(\mbox{$4$-chains}).
$$
On a shortest arc joining $\varphi_1$ to $\varphi_2$, we can find a point $\overline{y}$ such that $x'_i \not< \varphi_1 \not< \overline{y}$ for all $i$. Using this point, we get a chain $\langle \varphi_0, x'_i, \varphi_1, \overline{y}, \varphi_2 \rangle \in C^\ell_4(\varphi)$ whose boundary is
$$
d^1\langle \varphi_0, x'_i, \varphi_1, \overline{y}, \varphi_2 \rangle
= -\langle \varphi_0, \varphi_1, \overline{y}, \varphi_2 \rangle
-\langle \varphi_0, x'_i, \varphi_1, \varphi_2 \rangle.
$$
This allows us to express $\zeta$ as
$$
\zeta = N''
\langle \varphi_0, \varphi_1, \overline{y}, \varphi_2 \rangle
+ d^1(\mbox{$4$-chains}).
$$
Consequently, if $\zeta$ is a cycle, then it is a boundary, so that $H^\ell_3(\varphi) = 0$. Now we have $E^2_{1, 2} = \bigoplus_{\varphi} H^\ell_3(\varphi) = 0$.
\end{proof}


\end{document}